\newcommand{\spd}[0]{\mathbb{S}}
\newcommand{\mc}[1] {\mathcal{#1}}
\newcommand{\reals}[0] {\mathbb{R}}
\newcommand\limsupk[0]{\underset{k\to\infty}{\overline{\lim}}}
\newcommand{\inv}[1] {#1^{-1}}
\newtheorem{ass}{Assumption}
\newtheorem{cor}{Corollary}
\newtheorem{lem}{Lemma}
\newtheorem{thm}{Theorem}
\newtheorem{rem}{Remark}
\newtheorem{rmk}{Remark}
\newtheorem{prp}{Proposition}
\title{An Analysis of Closed-Loop Stability for Linear Model Predictive Control Based on Time-Distributed Optimization}
\author{Dominic Liao-McPherson, Terrence Skibik, Jordan Leung, Ilya Kolmanovsky, Marco M. Nicotra%
\thanks{This research is supported by the National Science Foundation Awards CMMI 1904441 and CMMI 1904394. D. Liao-McPherson is with ETH Zürich. Email: \texttt{dliaomc@ethz.ch}. J. Leung, and I. Kolmanovsky are with the University of Michigan, Ann Arbor. Email:\{jmleung, ilya\}@umich.edu. M. Nicotra and T. Skibik are with the University of Colorado, Boulder, Email: \{marco.nicotra, terrence.skibik\}@colorado.edu}
}
\begin{document}
\maketitle

\begin{abstract}
Time-distributed Optimization (TDO) is an approach for reducing the computational burden of Model Predictive Control (MPC). When using TDO, optimization iterations are distributed over time by maintaining a running solution estimate and updating it at each sampling instant. In this paper, TDO applied to input constrained linear MPC is studied in detail, and analytic expressions for the system gains and a bound on the number of optimization iterations per sampling instant required to guarantee closed-loop stability is derived. Further, it is shown that the closed-loop stability of TDO-based MPC can be guaranteed using multiple mechanisms including increasing the number of solver iterations, preconditioning the optimal control problem, adjusting the MPC cost matrices, and reducing the length of the receding horizon. These results in a linear system setting also provide insights and guidelines that could be more broadly applicable, e.g., to nonlinear MPC.
\end{abstract}

\section{Introduction}
Model Predictive Control (MPC) is a feedback strategy that generates inputs by solving an Optimal Control Problem (OCP) over a finite receding horizon \cite{Rawlings2009_MPCBook}. To implement MPC, the solution of the OCP must be computed within the sampling period of the controller; this may not always be feasible for systems with limited computing power, fast sampling rates, and/or highly nonlinear dynamics. 

One approach to reducing the computational footprint of MPC is to, maintain a running solution estimate and improve it, typically using one or more iterations of an iterative optimization method, instead of accurately solving the OCP at each sampling instant. This leads to a feedback loop between the plant and the optimization algorithm as illustrated in Figure \ref{fig:TDOFig}. In the context of MPC, this family of approaches is often called Real-time Methods\cite{zanelli2020lyapunov} or Time-distributed Optimization (TDO) \cite{liao2019time}. We adopt the latter term in this paper.

\begin{figure}[htbp]
  \centering
  \includegraphics[width=0.95\columnwidth]{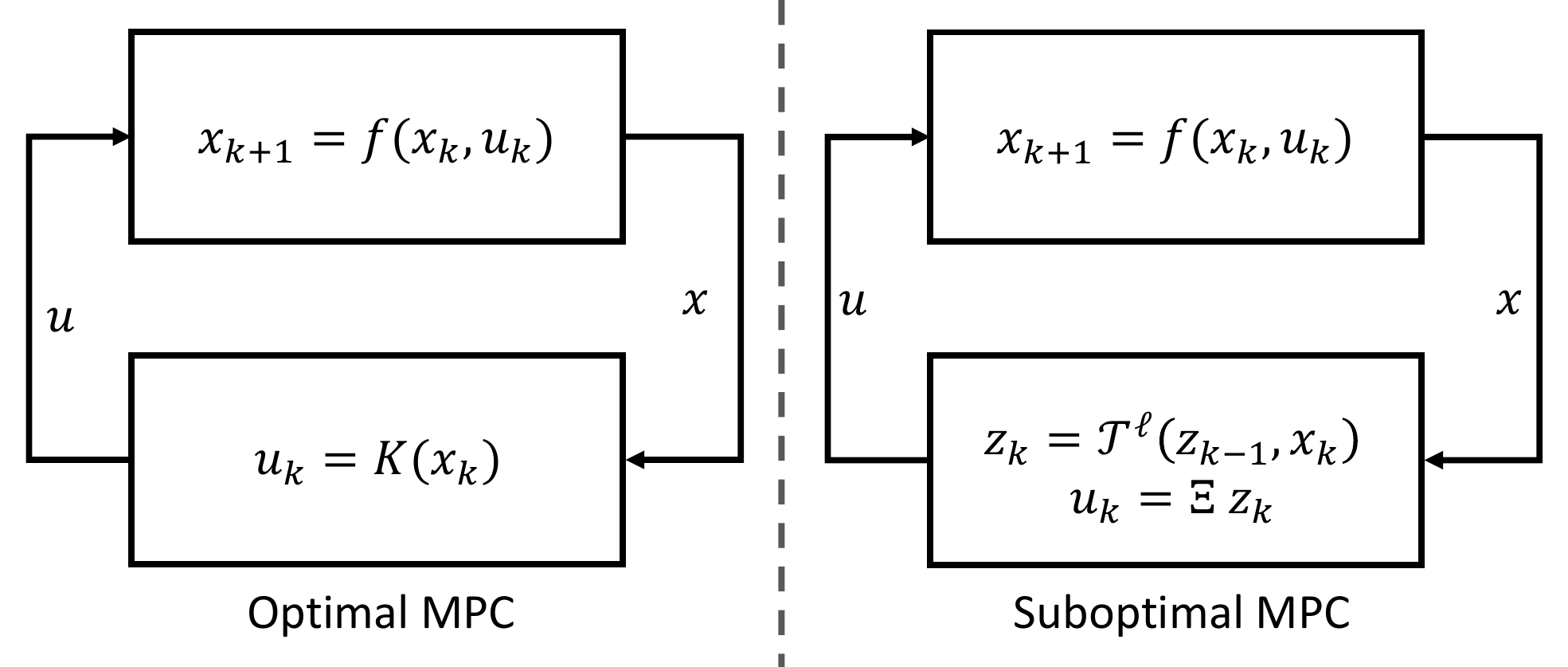}
  \caption{Optimal MPC is a static feedback law $K$. Time-distributed MPC is a compensator with a solution estimate $z$ as its internal state and dynamics defined by $\ell$ iterations of an optimization algorithm denoted by $\mc{T}^\ell$.}
  \label{fig:TDOFig}
\end{figure}

Running methods have been extensively studied in the context of time-varying convex optimization\cite{simonetto2020time} when the evolution of the problem data is exogenous. In MPC, the parameter evolution creates a feedback loop between the plant and optimization algorithm.

The effect of suboptimality on the closed-loop properties of MPC is well studied, see e.g., \cite{Scokaert1999_SuboptimalMPC,ALLAN2017_robustness,grune2010_analysisUnconNMPC,mcgovern1999closed,zeilinger2011real,rubagotti2014_stabilizingLMPC}. However, relatively few works explicitly study the case where the dynamics of the optimization algorithm are evolving in parallel with the system dynamics. The attractivity of the closed-loop system under the real-time iteration (RTI) scheme\cite{diehl2005real}, a specific case of TDO where a single unconstrained Newton step is performed per sampling instant, is established in \cite{diehl2005_nominalstabofRTI} given a sufficiently short sampling period. This result was recently extended to establish stability of an inequality constrained version of the RTI scheme using Lyapunov arguments \cite{zanelli2020lyapunov,zanelli2019stability}. In \cite{liao2019time}, the authors establish robust stability of TDO-MPC using the Input-to-State Stability (ISS) framework under the assumption that enough iterations are performed per sampling instant. All of these works address nonlinear MPC but rely on quantities that are typically not readily computable. Finally, in \cite{feller2017stabilizing}, an MPC formulation for linear systems using relaxed logarithmic barrier functions is proposed. The OCP is constructed so as to guarantee closed-loop stability of the coupled plant-optimizer, as well as bounded constraint violation, using a combination of shifting and one or more iterations of an optimization algorithm.

Control schemes which use continuous-time gradient flows are investigated in \cite{yoshida_instantLMPC}, \cite{nicotra2019_embeddingConst} and \cite{colombino2019online}. The stability of suboptimal sampled-data nonlinear MPC subject to input constraints is studied in \cite{graichen2010_stabilityandincremental} and \cite{graichen2012_afixedpoititeration} under the assumption of a linearly convergent optimization algorithm. A method for suboptimal LQMPC with state and control constraints using a dual accelerated gradient projection is proposed in \cite{rubagotti2014_stabilizingLMPC} which tightens constraints based on a pre-specified degree of suboptimality to ensure stability. Input constrained LQMPC, implemented using a primal accelerated gradient method, is considered in \cite{richter2011computational} and bounds on the number of iterations needed to achieve a pre-specified level of suboptimality are derived.

In this paper, we apply the framework of \cite{liao2019time} to the specific case of discrete-time input constrained LQMPC implemented using primal gradient-based methods. Our contributions as are threefold: (i) we derive explicit expressions for the ISS gains of the plant and optimizer and use them to provide numerically verifiable bounds on the number of iterations required for stability; (ii) we investigate several mechanisms\footnote{Namely, increasing the number of solver iterations, preconditioning the OCP, tuning the cost function, and reducing the prediction horizon.} for ensuring stability of the closed-loop system; (iii) using two numerical examples, we show that the iteration bound for asymptotic stability is comparable to the iteration bound for suboptimality computed in \cite{richter2011computational} for a stable system, and exhibits the same trends as the number of iterations needed to stabilize an unstable system in simulation.

Our analysis has some advantages compared to existing literature. We investigate a broader range of mechanisms for ensuring closed-loop stability; existing works only consider increasing the number of iterations in discrete time \cite{liao2019time}, decreasing the sampling period in sample-data settings \cite{zanelli2020lyapunov}, increasing the flow speed in continuous-time \cite{nicotra2019_embeddingConst} or propose various mechanisms for guaranteeing satisfaction of a terminal constraint \cite{Scokaert1999_SuboptimalMPC,zeilinger2011real,rubagotti2014_stabilizingLMPC,VANPARYS2019}. To the best of our knowledge, the effect of adjusting the horizon length and tuning the cost function on the stability of TDO has not been explicitly studied in the literature. We also derive explicit expressions for the number of iterations required for stability in terms of the problem data; the more general analyses \cite{liao2019time,zanelli2020lyapunov}, prove existence of an iteration/sampling period bound but do not include a recipe to compute it. We also include hard input constraints; the approach in \cite{feller2017stabilizing} does not guarantee input constraint satisfaction and is thus is not necessarily stabilizing in the presence of saturations. Moreover, our goal is to analyze several common MPC formulation and optimization algorithm combinations rather than proposing a new suboptimality resistant formulation as in \cite{feller2017stabilizing}.  Finally, the iteration bound in this paper is independent of any (arbitrary) pre-specified degree of suboptimality, unlike in \cite{richter2011computational} and \cite{rubagotti2014_stabilizingLMPC}.

Finally, although we establish these results in a particular setting, they provide insight and guidelines on how stability can be ensured more generally. For instance, we expect that the trends in the iteration bound we observe as the input penalty weight and horizon length change will carry over to nonlinear settings.

\textbf{Notation:} 
 The normal cone mapping of a closed, convex set $C$ is defined as follows:
\begin{equation*}
  \mc{N}_C(v) = \begin{cases}
  \{y~ |~ y^T(w-v) \leq 0, \forall w \in C\}, & \text{if}~v \in C,\\
  \emptyset & \text{else}.
  \end{cases}
\end{equation*}
If $A\in \reals^{m \times n},B\in \reals^{p\times q}$ then $A \otimes B \in \reals^{pm \times qn}$ denotes the Kronecker product. If $x \in \reals^n$, $y \in \reals^m$ then $(x,y) = [x^T \ y^T]^T \in \reals^{n+m}$. Let ($\spd_{++}^n$, $\spd_{+}^n$) denote the set of symmetric $n\times n$ positive (definite, semidefinite) matrices.  Given $x\in \reals^n$ and $W\in \spd^n_{++}$, the $W$-norm of $x$ is $\|x\|_W=\sqrt{x^TWx}$. Given $M \in \spd_+^n$ we use $\lambda_W^-(M)$ and $\lambda_W^+(M)$ respectively to denote the minimum and maximum eigenvalues of $\sqrt{W}^{-1}M\sqrt{W}^{-1}$; these satisfy $\lambda_W^-(M)\|x\|_W^2\leq\|x\|_M^2\leq\lambda_W^+(M)\|x\|_W^2$. The condition number of $M\in \spd^n_+ $ is $ \kappa(M) = \lambda^+(M)/\lambda^-(M)$. If the subscript is omitted then it is understood that $W = I$. Our use of comparison functions, e.g., class $\mc{K}$, $\mc{KL}$, or $\mc{L}$ functions, follows \cite{kellett2014compendium}. We also make extensive use of Input-to-state Stability (ISS) analysis tools such as asymptotic gains, see \cite{jiang2001input,jiang2004nonlinear} for more details, and use $\overline{\lim}$ as shorthand for $\limsup$.

\section{Problem Setting}
Consider a Linear Time Invariant (LTI) system
\begin{equation}\label{eq:lti}
    x_{k+1} = Ax_k + Bu_k,
\end{equation}
where $A\in \reals^{n\times n}$, $B\in \reals^{n\times m}$, $x\in\reals^n$ is the state, and $u\in\reals^m$ is the control input. The control objective is to stabilize the origin of \eqref{eq:lti} while enforcing the input constraint $u_k\in\mc{U},~\forall k \geq 0$ where $\mc{U}\subseteq \reals^m$ is a specified constraint set.

We will approach the problem using MPC. To do so, consider the following Parameterized Optimal Control Problem (POCP)
\begin{subequations} \label{eq:ocp}
\begin{alignat}{2}
\underset{\xi,\nu}{\mathrm{min}} &\quad \|\xi_N\|_P^2+ \sum_{i = 0}^{N-1} \|\xi_i\|_Q^2 + \|\mu_i\|_R^2 \label{seq:cost_function}\\
\mathrm{s.t.} &\quad \xi_{i+1} = A \xi_i + B \mu_i, \qquad i = 0, \ldots,N\!-\!1,\label{seq:system_dynamics}\\
 &\quad \xi_0 = x, \quad \mu_i \in \mc{U}, \qquad~~ i = 0,\ldots,N\!-\!1.
\end{alignat}
\end{subequations}
where $N > 0$ is the horizon length, $Q\in \reals^{n\times n}, R\in \reals^{m\times m}$ and $P \in \reals^{n \times n}$ are weighting matrices, $x\in \reals^n$ is the parameter/measured state, $\nu = (\mu_0,\ldots,\mu_{N-1})$, and $\xi = (\xi_0,\ldots,\xi_N)$. We make the following assumptions to ensure that \eqref{eq:ocp} can be used to construct a stabilizing feedback law for \eqref{eq:lti}.
\begin{ass}\label{ass:lti-ocp}
The pair $(A,B)$ is stabilizable, $R\in \spd_{++}^m$, $Q \in \spd_{++}^n$ and $P\in \spd_{++}^n$ satisfies $P=Q+A^TPA-(A^TPB)(R+B^TPB)^{-1}(B^TPA)$.
\end{ass}
\begin{ass}\label{ass:cnstr}
The constraint set $\mc{U}\subseteq\reals^m$ is closed, convex, and contains the origin in its interior.
\end{ass}

The MPC feedback law (see, e.g., \cite{mayne2000mpc}) for \eqref{eq:lti} is
\begin{equation} \label{eq:feedback_law}
   u = K(x) = \Xi S(x),
\end{equation}
where $\Xi=[1~0~\ldots~0]^T\!\otimes I_m$ selects $\mu_0$ from $\nu$ and $S(x)$ denotes the global solution of \eqref{eq:ocp} for the parameter value $x$.

\begin{rem}
The assumption $Q \in \spd_{++}^n$ can be replaced with the weaker condition $Q\in \spd_+^n$ and $(A,Q)$ observable. However, the stronger condition lends itself to a tighter ISS gain.
\end{rem}

Oftentimes, not enough computational resources are available to solve \eqref{eq:ocp} at each iteration. Instead, we perform a finite number $\ell \in \mathbb{N}_{(0,\infty)}$ of iterations at each sampling instant and \textit{warmstart} the optimization algorithm using the estimate from the previous sampling instant. This leads to a coupled plant-optimizer system 
\begin{subequations}  \label{eq:plant_opt}
\begin{align}
    &z_{k} = \mc{T}^\ell(z_{k-1},x_k), \label{eq:opt_dyn}\\
    &x_{k+1} = Ax_k + B\Xi z_k,
\end{align}
\end{subequations}
where $z_k$ is a running estimate of $S(x_k)$ and $\mc{T}^\ell$ represents the operator associated with $\ell$ iterations of an optimization algorithm. In this paper, $\mc{T}^\ell$ will represent both accelerated and non-accelerated projected gradient methods.

In a recent paper \cite{liao2019time}, we analyzed a generalized version of \eqref{eq:plant_opt} using Input-to-State Stability (ISS) and small-gain tools. However, since the setting was fairly general, the paper \cite{liao2019time} was limited to existence type proofs. In this paper, we consider an analytically tractable special case and derive computable expressions for the iteration bounds. Our goal is to analyze these expressions to better understand what factors influence the properties of \eqref{eq:plant_opt}.

\begin{rmk}
The technical novelty of the analysis presented in this paper lies in the use of computable expressions throughout and careful use of weighted norms to reduce the conservatism of the final iterations bounds.
\end{rmk}

\section{Optimization Strategy}
The OCP \eqref{eq:ocp} can be written in a condensed form as
\begin{equation} \label{eq:ocp_reduced}
  \min_{z\in \mc{Z}}\quad f(z,x)
\end{equation}
where $\mc{Z} = \mc{U} \times \mc{U} \times \cdots \times \mc{U}$ and
\begin{equation} \label{eq:cost_reduced}
    f(z,x) = \| (z,x) \|_M, ~~ M = \begin{bmatrix}
H & G\\
G^T & W
\end{bmatrix}.
\end{equation}
Expressions for the matrices $H\in \spd_{++}^{Nm}$, $W\in \spd_{++}^n$, and $G\in \reals^{Nm \times n}$ are given in the appendix.  The following lemma, whose proof can also be found in the appendix, characterizes $W$.
\begin{lem} \label{lem:W_geq_P}
The matrix $W$ in \eqref{eq:ocp_reduced} satisfies $W \succeq P \succ 0$ if: i) $Q\in \spd_+^n$, ii) $(A,Q)$ are observable, iii) $R \in \spd_{++}^m$, and iv) $P$ satisfies the Riccati equation in Assumption \ref{ass:lti-ocp}.
\end{lem}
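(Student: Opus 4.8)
The plan is to make $W$ explicit, reduce the semidefinite bound $W\succeq P$ to a single Riccati-induced matrix inequality, and then treat the strict bound $P\succ 0$ separately. First I would read $W$ off the condensed cost. Since the quadratic form expands as $\|(z,x)\|_M^2=z^THz+2z^TGx+\|x\|_W^2$, the block $W$ is exactly the cost incurred with zero input: setting $z=0$ makes the trajectory of \eqref{seq:system_dynamics} equal to $\xi_i=A^ix$, and substituting into \eqref{seq:cost_function} gives
\[ W=(A^N)^TPA^N+\sum_{i=0}^{N-1}(A^i)^TQA^i, \]
matching the appendix. Because the stabilizing solution of the Riccati equation is positive semidefinite, $R+B^TPB\succ 0$ and the gain $K=(R+B^TPB)^{-1}B^TPA$ is well defined.

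The crux is the single inequality $\Delta:=Q+A^TPA-P\succeq 0$. Rearranging the Riccati equation of Assumption~\ref{ass:lti-ocp} yields $\Delta=(A^TPB)(R+B^TPB)^{-1}(B^TPA)$, and since $R+B^TPB\succ 0$ the right-hand side is positive semidefinite. Telescoping the terminal term through $(A^N)^TPA^N-P=\sum_{i=0}^{N-1}(A^i)^T(A^TPA-P)A^i$, I would then obtain
\[ W-P=\sum_{i=0}^{N-1}(A^i)^T\big(Q+A^TPA-P\big)A^i=\sum_{i=0}^{N-1}(A^i)^T\Delta A^i\succeq 0, \]
which proves $W\succeq P$. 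Equivalently, one may note that the Schur complement $W-G^TH^{-1}G$ equals the unconstrained optimal value $\min_z\|(z,x)\|_M^2=\|x\|_P^2$ by the principle of optimality, so that $W=P+G^TH^{-1}G\succeq P$; the telescoping identity is preferable since it avoids inverting $H$.

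It remains to upgrade $P\succeq 0$ to $P\succ 0$ under the relaxed hypotheses (i)--(ii), where $Q\in\spd_+^n$ with $(A,Q)$ observable replaces $Q\in\spd_{++}^n$. Here I would use the closed-loop representation $P=\sum_{i=0}^{\infty}(A_K^i)^T(Q+K^TRK)A_K^i$, where $A_K=A-BK$ is Schur by stabilizability. If $\|x\|_P^2=0$, every summand vanishes, so $R^{1/2}KA_K^ix=0$ and $Q^{1/2}A_K^ix=0$ for all $i\geq 0$; as $R\succ 0$ the first gives $KA_K^ix=0$, whence $A_K^ix=A^ix$ by induction, and the second then reads $QA^ix=0$ for all $i$, so $x$ lies in the unobservable subspace of $(A,Q)$ and hence $x=0$. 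This gives $P\succ 0$ and therefore $W\succeq P\succ 0$. I expect the main obstacle to be precisely this last step: the bound $W\succeq P$ is two lines once $\Delta\succeq 0$ is in hand, whereas strict positivity genuinely requires the observability hypothesis together with the closed-loop Lyapunov representation, and the implication $KA_K^ix=0\Rightarrow A_K^ix=A^ix$ is the easy-to-overlook detail.
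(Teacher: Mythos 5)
Your proof is correct, and its core step coincides with the paper's own argument: substituting the rearranged Riccati identity $\Delta := Q + A^TPA - P = (A^TPB)(R+B^TPB)^{-1}(B^TPA) \succeq 0$ into the zero-input expansion $W = \sum_{i=0}^{N-1}(A^i)^TQA^i + (A^N)^TPA^N$ and telescoping gives $W - P = \sum_{i=0}^{N-1}(A^i)^T\Delta A^i \succeq 0$, which after reindexing is literally the paper's identity $W = P + \sum_{k=1}^{N}(A^k)^TPB(R+B^TPB)^{-1}B^TP(A^k)$. Where you add genuine content is the strict positivity $P \succ 0$: the paper disposes of this in one sentence ("the assumptions imply $P\succ 0$"), implicitly invoking standard LQR theory, whereas you derive it from the closed-loop Lyapunov representation $P = \sum_{i=0}^{\infty}(A_K^i)^T(Q+K^TRK)A_K^i$ together with observability of $(A,Q)$, including the easy-to-overlook induction $KA_K^ix = 0 \Rightarrow A_K^ix = A^ix$ that converts kernel conditions along the closed-loop trajectory into conditions along the open-loop one. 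One caveat, which you share with the paper but at least partially acknowledge: the representation of $P$ as an infinite sum (and the Schur property of $A_K$) holds for the \emph{stabilizing} solution of the Riccati equation, so hypothesis (iv) must be read as selecting that solution --- "$P$ satisfies the Riccati equation" alone admits indefinite solutions for which the lemma is false. Your Schur-complement aside, $W - G^TH^{-1}G = P$ by optimality of the unconstrained problem so that $W = P + G^TH^{-1}G \succeq P$, is also a valid alternative route to the semidefinite bound, though as you note it needs $H^{-1}$ and buys nothing over the telescoping identity here.
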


Since \eqref{eq:cost_reduced} is strongly convex, satisfaction of the following Variational Inequality (VI) is necessary and sufficient for optimality of any $z\in \reals^{Nm}$ with respect to \eqref{eq:ocp_reduced} \cite{dontchev2009implicit},
\begin{equation} \label{eq:KKT}
    H z + Gx + \mc{N}_\mc{Z}(z) \ni 0,
\end{equation}
and the solution mapping
\begin{equation} \label{eq:solution_mapping}
  S(x) = \mc{A}^{-1}(-Gx), \quad \mc{A} = H + \mc{N}_\mc{Z},
\end{equation}
is a function. A common approach for solving \eqref{eq:KKT} is with an iterative optimization algorithm. A single iteration of a typical optimization algorithm can be represented by $z_{j+1} = \mc{T}(z_j,x)$, where $\mc{T}: \reals^{Nm} \times \reals^n \to \reals^{Nm}$ is the algorithm mapping, $z_j\in \reals^{Nm}$ is the solution estimate and $j$ is the iteration index. Performing multiple iterations leads to the following recursive definition for the $\ell$-step optimization operator $\mc{T}^\ell:\reals^{Nm} \times \reals^n \to \reals^{Nm}$,
\begin{equation} \label{eq:Tl_def}
    \mc{T}^{\ell}(z,x) = \mc{T}(\mc{T}^{\ell-1}(z,x),x),
\end{equation}
where $z \in \reals^{Nm}$ is the initial guess, $x$ is the parameter, and $\mc{T}^0(z,x) := z$.
In the following subsections, we present two possible choices for \eqref{eq:Tl_def}.
\subsection{Projected Gradient Method}
One method for solving \eqref{eq:ocp_reduced} is the projected gradient method (PGM)
\begin{equation}\label{eq:ProjGrad}
    z_{j+1} = \Pi_\mathcal{V}[z_j - \alpha \nabla_z f(z_j,x)],
\end{equation}
where $ \alpha=1/(\lambda^+(H) + \lambda^-(H))$ and $\Pi_\mathcal{V}$ denotes Euclidean projection onto $\mathcal{V}$. This particular choice of the step size $\alpha$ maximizes the convergence rate of the algorithm \cite{taylor2018exact}.
\begin{thm}(\cite[Theorem 3.1]{taylor2018exact}) \label{thm:ProjGrad_Convergence}
Let $\mc{T}^\ell$ represent the PGM \eqref{eq:ProjGrad}, pick any $x\in \reals^n$ and suppose Assumptions \ref{ass:lti-ocp} and \ref{ass:cnstr} hold. Then, for any $z \in \reals^{Nm}$,
\begin{equation*}
    \|\mc{T}^\ell(z,x) - S(x)\| \leq \eta^\ell \|z - S(x)\|,
\end{equation*}
where $\eta = (\kappa(H) - 1)/(\kappa(H) + 1)$ and $\kappa(H) = \lambda^+(H)/\lambda^-(H)$.
\end{thm}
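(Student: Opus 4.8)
The plan is to recognize Theorem~\ref{thm:ProjGrad_Convergence} as an instance of the standard linear-convergence rate for the projected gradient method applied to a strongly convex, smooth objective, and to supply a short self-contained argument in addition to citing \cite[Theorem 3.1]{taylor2018exact}. The first step is to read off the curvature of the objective from \eqref{eq:cost_reduced}: since $f(\cdot,x)$ is a quadratic in $z$ whose $z$-Hessian is the constant matrix $2H$ with $H\in\spd_{++}^{Nm}$ (positive definiteness coming from $R\in\spd_{++}^m$ through the expression for $H$ in the appendix), the function is $\lambda^-(H)$-strongly convex and $\lambda^+(H)$-smooth in $z$, uniformly in the parameter $x$. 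The partial gradient is $\nabla_z f(z,x)=2(Hz+Gx)$, which is consistent with the optimality VI \eqref{eq:KKT} because the normal cone is a cone.

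The core of the argument is to show that a single PGM step $\mc{T}(\cdot,x)=\Pi_{\mc{Z}}\circ G_\alpha$ is an $\eta$-contraction, where $G_\alpha(z)=z-\alpha\nabla_z f(z,x)=(I-2\alpha H)z-2\alpha Gx$. First, $\Pi_{\mc{Z}}$ is nonexpansive because $\mc{Z}$ is closed and convex (Assumption~\ref{ass:cnstr}, which makes each $\mc{U}$ factor closed and convex). Second, $G_\alpha$ is affine with Jacobian $I-2\alpha H$, whose spectral norm equals
\begin{equation*}
  \max_{\lambda\in[\lambda^-(H),\,\lambda^+(H)]}\,|1-2\alpha\lambda|
  =\max\bigl(|1-2\alpha\lambda^-(H)|,\,|1-2\alpha\lambda^+(H)|\bigr).
\end{equation*}
Substituting the stated step size $\alpha=1/(\lambda^+(H)+\lambda^-(H))$ makes both endpoints equal in magnitude to $(\lambda^+(H)-\lambda^-(H))/(\lambda^+(H)+\lambda^-(H))=(\kappa(H)-1)/(\kappa(H)+1)=\eta$, so $G_\alpha$ is an $\eta$-contraction. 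Composing a nonexpansive map with an $\eta$-contraction yields $\|\mc{T}(z,x)-\mc{T}(z',x)\|\le\eta\|z-z'\|$ for all $z,z'$.

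To finish, I identify $S(x)$ as the fixed point of $\mc{T}(\cdot,x)$: the projection characterization of the normal cone gives the equivalence $-\nabla_z f(z,x)\in\mc{N}_{\mc{Z}}(z)\iff z=\Pi_{\mc{Z}}[z-\alpha\nabla_z f(z,x)]=\mc{T}(z,x)$, so the unique VI solution $S(x)$ from \eqref{eq:solution_mapping} satisfies $\mc{T}(S(x),x)=S(x)$. Setting $z'=S(x)$ in the contraction estimate gives $\|\mc{T}(z,x)-S(x)\|\le\eta\|z-S(x)\|$, and a one-line induction on $\ell$ using the recursion \eqref{eq:Tl_def} delivers the claimed bound $\|\mc{T}^\ell(z,x)-S(x)\|\le\eta^\ell\|z-S(x)\|$.

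The argument is essentially routine given the standard nonexpansiveness/contraction toolbox; I do not expect a genuine obstacle so much as a piece of \textbf{bookkeeping} that must be handled carefully. The delicate point is the factor of two: because the cost in \eqref{eq:cost_reduced} is written as an (unhalved) squared weighted norm, the gradient carries a factor $2$, and it is precisely this factor that makes the paper's step size $\alpha=1/(\lambda^+(H)+\lambda^-(H))$ the \emph{optimal} choice collapsing both spectral endpoints onto $\eta$; using the halved convention would shift the rate to $\kappa(H)/(\kappa(H)+1)$. Verifying this normalization, together with confirming $\lambda^-(H)>0$ so that $\kappa(H)$ is finite and $\eta<1$, is the one place where care is required.
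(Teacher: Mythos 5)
Your proof is correct, but it takes a different route from the paper: the paper does not prove this theorem at all, it simply imports it as \cite[Theorem 3.1]{taylor2018exact}, which establishes the (tight, worst-case exact) rate $\max(|1-\alpha L|,|1-\alpha m|)$ for the proximal/projected gradient method on general $L$-smooth, $m$-strongly convex functions. Your argument instead exploits the specific quadratic structure of \eqref{eq:cost_reduced}: the gradient step $G_\alpha$ is \emph{affine} with Jacobian $I-2\alpha H$, so the contraction factor is just a spectral-norm computation, and composing with the nonexpansive projection $\Pi_{\mc{Z}}$ plus the fixed-point characterization of the VI \eqref{eq:KKT} (using that $\mc{N}_{\mc{Z}}(z)$ is a cone to absorb the factor $2\alpha$) gives the bound by induction on $\ell$. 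What the citation buys is generality (non-quadratic objectives, where the gradient map is not affine and one needs co-coercivity-type arguments) and tightness of the rate; what your argument buys is a short, fully self-contained verification that also makes the paper's normalization explicit -- in particular your bookkeeping of the unhalved cost (Hessian $2H$, hence $L=2\lambda^+(H)$, $m=2\lambda^-(H)$, optimal step $2/(L+m)=1/(\lambda^+(H)+\lambda^-(H))$) confirms that the step size stated in \eqref{eq:ProjGrad} is exactly the one achieving $\eta=(\kappa(H)-1)/(\kappa(H)+1)$, a point the paper leaves implicit and which is easy to get wrong by a factor of two. One minor remark: you correctly read the projection in \eqref{eq:ProjGrad} as $\Pi_{\mc{Z}}$ (the set $\mathcal{V}$ there is a typo) and the cost as the \emph{squared} $M$-norm, both consistent with the paper's usage elsewhere (e.g., the constants $m=2\lambda^-(H)$, $L=2\lambda^+(H)$ in Algorithm~\ref{algo:APGM} and the value function \eqref{eq:Lyapunov_v2}).
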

\subsection{Accelerated Projected Gradient Method}
Another option for defining $\mc{T}^\ell$ is the accelerated projected gradient method (APGM) \cite{nesterov2013gradient}. The method is summarized in Algorithm~\ref{algo:APGM} and the following theorem summarizes the convergence properties of the APGM for strongly convex problems.
\begin{algorithm}[h]
\caption{Accelerated Projected Gradient Method} \label{algo:APGM}
\begin{algorithmic}[1] 
 \renewcommand{\algorithmicrequire}{\textbf{Input:}}
 \renewcommand{\algorithmicensure}{\textbf{Output:}}
 \Require $z \in \mc{Z}$, $x$, $\ell > 0$ 
 \Ensure  $z^+ = z_\ell$
 \State $m = 2\lambda^-(H)$, $L = 2\lambda^+(H)$, $\kappa = L/m$
 \State $\theta_0 = 1$, $\theta_{-1} = 0$, $z_0 = z$
 \For{$k = 0,~...~,\ell-1$}
 \State $y_k = z_k + \frac{\theta_k \gamma_k}{\gamma_k + m \theta_k} (z_k - z_k)$, $\gamma_k = \theta_{k-1}^2 L$
 \State $z_{k+1} = \Pi_\mc{Z}[y_k - L^{-1} \nabla_z f(y_k,x)]$
 \State $z_{k+1} = z_k + \theta_k^{-1} (z_{k+1} - z_k)$
 \State $\theta_{k+1} = \frac{\zeta_k}{2} \left(\sqrt{1 + \left(4\theta_{k}^2/\zeta_k^2\right)} - 1\right)$,~~$\zeta_k = \theta_{k}^2 - \kappa^{-1}$
 \EndFor
 \end{algorithmic}
 \end{algorithm}

\begin{thm} \label{lmm:APGM_error}
Given Assumptions \ref{ass:lti-ocp}--\ref{ass:cnstr}, let $\mc{T}^\ell$ represent Algorithm~\ref{algo:APGM} and pick any $x\in \reals^n$. Then for any $z \in \mc{Z}$,
\begin{equation*}
    ||\mc{T}^\ell(z,x) - S(x)||_H \leq \eta_a(\ell) ||z - S(x)||_H,
\end{equation*}
where $\eta_a(\ell) = \sqrt{\kappa(H)} \left(1 - (\kappa(H))^{-\frac12} \right)^{\frac{\ell-1}{2}}$.
\end{thm}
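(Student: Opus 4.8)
The plan is to reduce the claim to a known function-value convergence rate for Nesterov's accelerated scheme on a strongly convex composite problem, and then translate that rate into the $H$-norm iterate bound by sandwiching the suboptimality in function value between two weighted distances. First I would recast $\min_{z\in\mc{Z}} f(z,x)$ as unconstrained minimization of the composite objective $F(z) = f(z,x) + \iota_{\mc{Z}}(z)$, where $\iota_{\mc{Z}}$ is the indicator of $\mc{Z}$; by Assumption~\ref{ass:cnstr} the set $\mc{Z}$ is closed and convex, so $\iota_{\mc{Z}}$ is proper, closed, and convex and the proximal framework applies, with $\Pi_{\mc{Z}}$ its proximal map. Since $f(\cdot,x)$ is quadratic with $\nabla_z^2 f = 2H$, the smooth part is $L$-smooth and $m$-strongly convex with $L = 2\lambda^+(H)$ and $m = 2\lambda^-(H)$, so $\kappa := L/m = \kappa(H)$; Algorithm~\ref{algo:APGM} is exactly Nesterov's method for this pair and $S(x) = \argmin F$ by \eqref{eq:solution_mapping}.

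The central intermediate estimate I would establish is the function-value bound
\begin{equation*}
  f(\mc{T}^\ell(z,x),x) - f(S(x),x) \leq \lambda^+(H)\,\bigl(1 - \kappa(H)^{-1/2}\bigr)^{\ell-1}\,\|z - S(x)\|^2 .
\end{equation*}
The essential feature is that its right-hand side depends on the initial iterate only through the \emph{distance} $\|z - S(x)\|^2$ and carries no $f(z,x) - f(S(x),x)$ term. This is precisely what the initialization $\theta_0 = 1$ buys: in the estimate-sequence (equivalently, potential-function) analysis the initial function-value gap enters with weight $A_0 = 0$ and drops out, leaving only the $\tfrac{\gamma_0}{2}\|z-S(x)\|^2$ contribution, while the accumulated weights grow geometrically at rate $(1-\kappa^{-1/2})^{-1}$. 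I would obtain this either by invoking the guarantee for \cite{nesterov2013gradient} or by running the standard estimate-sequence argument and tracking the constants produced by the $\theta_k$-recursion in Algorithm~\ref{algo:APGM}.

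With this in hand the conversion is short. For the left-hand side I would use that every iterate is feasible (it is an output of $\Pi_{\mc{Z}}$) together with the first-order condition $-(H S(x) + Gx) \in \mc{N}_{\mc{Z}}(S(x))$ from \eqref{eq:KKT}: writing $e = z - S(x)$ and expanding the quadratic gives $f(z,x) - f(S(x),x) = \|e\|_H^2 + \langle \nabla_z f(S(x),x), e\rangle$, and the inner product is nonnegative for feasible $z$, so $\|\mc{T}^\ell(z,x) - S(x)\|_H^2 \leq f(\mc{T}^\ell(z,x),x) - f(S(x),x)$. For the right-hand side I would pass from the Euclidean to the $H$-norm via $\|z - S(x)\|^2 \leq \lambda^-(H)^{-1}\|z - S(x)\|_H^2$. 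Combining the three displays and substituting $\lambda^+(H)/\lambda^-(H) = \kappa(H)$ yields $\|\mc{T}^\ell(z,x) - S(x)\|_H^2 \leq \kappa(H)\,(1-\kappa(H)^{-1/2})^{\ell-1}\|z - S(x)\|_H^2$, and taking square roots gives exactly $\eta_a(\ell)$.

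I expect the main obstacle to be establishing the intermediate bound in precisely the clean, distance-only form above: the geometric rate $(1-\kappa^{-1/2})^{\ell-1}$ and the constant $\lambda^+(H)$ must be pinned down from the $\theta_k$-recursion, and one must verify that the $\theta_0 = 1$ initialization genuinely annihilates the initial function-value gap. Without this, a naive sandwich would leave a stray $f(z,x) - f(S(x),x)$ term on the right which, because of the active constraints, \emph{cannot} be controlled by $\|z - S(x)\|_H^2$ alone: as $z \to S(x)$ along an active boundary direction this gap is $O(\|z-S(x)\|)$ while $\|z-S(x)\|_H^2$ is $O(\|z-S(x)\|^2)$. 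The feasibility-plus-normal-cone argument on the left and the eigenvalue conversions on the right are routine by comparison.
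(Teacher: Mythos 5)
Your proposal follows essentially the same route as the paper's proof: the paper likewise starts from the cited function-value bound $f(\mc{T}^\ell(z,x),x)-f(S(x),x)\leq \lambda^+(H)\left(1-\kappa(H)^{-1/2}\right)^{\ell-1}\|z-S(x)\|^2$, lower-bounds the gap by $\|\mc{T}^\ell(z,x)-S(x)\|_H^2$ using strong convexity and optimality of $S(x)$ over $\mc{Z}$ (your expansion plus normal-cone step is exactly the justification of that inequality), converts the Euclidean distance to the $H$-norm via $\lambda^-(H)$, and takes square roots. Your additional remarks on why the distance-only form of the intermediate bound is essential, and why a stray initial function-value gap could not be absorbed under active constraints, are correct but do not change the argument.
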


\begin{proof}
Define $\bar\eta^2(\ell) = \left(1 - (\kappa(H))^{-\frac12} \right)^{\ell-1}$, the APGM satisfies, 
\begin{equation} \label{eq:APGM_cost_conv}
    f(\mc{T}^\ell(z,x),x)- f(S(x),x) \leq  \lambda^+(H) \eta_a(\ell)^2 \|z - S(x)\|^2,
\end{equation}
see e.g., \cite{vandenberghe2009optimization,tseng2008accelerated}. Since $S(x)$ minimizes \eqref{eq:ocp_reduced}, which is a strongly convex function in $z$ for any $x$, we have that
\begin{equation} \label{eq:convexity_error_bound}
    f(z',x)  \geq f(S(x),x) + ||z' - S(x)||_H^2~~\forall z' \in \mc{Z}.
\end{equation}
Combining \eqref{eq:APGM_cost_conv}, \eqref{eq:convexity_error_bound} and$\lambda^-(H) \|z' - S(x)\|^2 \leq \|z' - S(x)\|_H^2$ yields 
\begin{equation}
    ||\mc{T}^\ell(z,x) - S(x)||_H^2 \leq \kappa(H) \bar \eta(\ell)^2 ||z - S(x)||_H^2.
\end{equation}
Taking the square root of both sides completes the proof.
\end{proof}

\begin{cor} \label{corr:ell_star_a}
The function $\eta_a$ defined in Theorem~\ref{lmm:APGM_error}, satisfies $\eta_a(\ell) < 1$ for all
\begin{equation*}
  \ell > \bar{\ell} = 1 - \frac{\log \left({\kappa(H)}\right)}{ \log(1 - 1/\sqrt{\kappa(H)})}.
\end{equation*}
\end{cor}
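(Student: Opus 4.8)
The plan is to prove the corollary by directly manipulating the inequality $\eta_a(\ell) < 1$ through monotone operations until $\ell$ is isolated. Writing $\kappa = \kappa(H)$ for brevity, I would first record that $H \in \spd_{++}^{Nm}$ forces $\kappa \geq 1$, and that in the nontrivial case $\kappa > 1$ the base satisfies $0 < 1 - \kappa^{-1/2} < 1$ (the case $\kappa = 1$ is immediate, since then $\eta_a(\ell) = 0 < 1$ for every $\ell > 1$, and the bound $\bar\ell$ degenerates to $1$). Because $\eta_a(\ell) = \sqrt{\kappa}\,(1 - \kappa^{-1/2})^{(\ell-1)/2}$ is nonnegative, the requirement $\eta_a(\ell) < 1$ is equivalent to $\eta_a(\ell)^2 < 1$, that is,
\begin{equation*}
    \kappa\,(1 - \kappa^{-1/2})^{\ell-1} < 1 .
\end{equation*}

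Next I would apply the natural logarithm, which is strictly increasing and hence preserves the inequality while linearizing the dependence on $\ell$, giving
\begin{equation*}
    \log \kappa + (\ell-1)\log\!\left(1 - \kappa^{-1/2}\right) < 0 .
\end{equation*}
It then remains only to solve for $\ell$. The single point requiring care is the sign of $\log(1 - \kappa^{-1/2})$: since $0 < 1 - \kappa^{-1/2} < 1$, this quantity is strictly negative, so dividing through by it reverses the inequality. Doing so yields
\begin{equation*}
    \ell - 1 > \frac{-\log \kappa}{\log\!\left(1 - \kappa^{-1/2}\right)},
\end{equation*}
and therefore $\ell > 1 - \log(\kappa)/\log(1 - 1/\sqrt{\kappa}) = \bar\ell$, as claimed. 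Since every step above is an equivalence (for $\kappa > 1$), the derivation in fact shows that $\ell > \bar\ell$ is both necessary and sufficient for $\eta_a(\ell) < 1$, which is slightly stronger than the stated sufficiency.

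I do not expect any genuine obstacle in this argument; it is essentially a one-line algebraic rearrangement. The chief thing to get right is the inequality reversal noted above, together with confirming that $\bar\ell$ is well defined and exceeds $1$. The latter is consistent with intuition: because $\log\kappa > 0$ while $\log(1 - 1/\sqrt{\kappa}) < 0$, the subtracted fraction is negative, so $\bar\ell > 1$, reflecting that at least two iterations of the APGM are needed before the contraction factor $\eta_a$ drops below unity.
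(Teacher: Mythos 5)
Your proof is correct: the paper states this corollary without any proof, treating it as immediate algebra following from the form of $\eta_a$ in Theorem~\ref{lmm:APGM_error}, and your derivation (squaring, taking logarithms, and dividing by the strictly negative quantity $\log\left(1 - 1/\sqrt{\kappa(H)}\right)$ with the attendant inequality reversal) is exactly the intended argument. Your treatment of the degenerate case $\kappa(H)=1$ and your observation that $\ell > \bar{\ell}$ is in fact necessary as well as sufficient are correct refinements beyond what the paper claims.
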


\begin{rmk}
The PGM and APGM both converge q-linearly but their convergence rates $\eta$ and $\eta_a$ have different dependencies on the condition number. The convergence rate of the PGM scales like $1/\kappa$ while that of the APGM scales like $1/\sqrt{\kappa}$. Thus, we expect the APGM to be faster for ill-conditioned problems. However, the PGM is always a contraction, i.e., the error $\|z - S(x)\|$ decreases for any $\ell > 0$, while the APGM is only guaranteed to be contractive (with respect to $\|\cdot\|_H$) if $\ell > \bar{\ell}$.
\end{rmk}

\section{Stability Analysis of the Coupled System}
In this section, we analyze \eqref{eq:plant_opt} using the ISS framework. Our goal is to derive numerically verifiable conditions under which the real-time implementation of MPC leads to an asymptotically stable closed-loop system. Throughout this section we will use the error signal
\begin{equation} \label{eq:e_def}
  e_k = z_k - S(x_k)
\end{equation}
to quantify the degree of suboptimality of the solution estimate.


\subsection{Properties of the Solution Mapping}
We begin with an analysis of the OCP solution mapping. Both the optimization algorithms and optimal MPC feedback policy depend strongly on its properties. 
\begin{prp}
Let Assumptions \ref{ass:lti-ocp}-\ref{ass:cnstr} hold and $\mc{A}$ be defined by  \eqref{eq:solution_mapping}, then: (i) $\mc{A}$ is strongly monotone, i.e. $\langle u -v,y-z\rangle \geq \|y-z\|_H^2, \forall u\in A(y), v\in A(z)$; (ii) $\mc{A}^{-1}$ is a co-coercive function, i.e. $\left \langle \inv{\mc{A}} u - \inv{\mc{A}}v, u-v\right \rangle \geq ||\inv{\mc{A}} u - \inv{\mc{A}} v||_H^2$; and (iii) $\inv{\mc{A}}$ is Lipschitz continuous, i.e. $\|\inv{\mc{A}} u - \inv{\mc{A}}v\|_H \leq ||u-v||_{\inv{H}}$.
\end{prp}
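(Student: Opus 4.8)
The plan is to prove (i) directly from the structure $\mc{A} = H + \ncone{\mc{Z}}$ and then obtain (ii) and (iii) as consequences. For (i), fix $u \in \mc{A}(y)$ and $v \in \mc{A}(z)$. By definition of $\mc{A}$ these decompose as $u = Hy + n_y$ and $v = Hz + n_z$ with $n_y \in \ncone{\mc{Z}}(y)$ and $n_z \in \ncone{\mc{Z}}(z)$. Then
\begin{equation*}
  \langle u-v,\, y-z \rangle = (y-z)^T H (y-z) + \langle n_y - n_z,\, y-z \rangle = \|y-z\|_H^2 + \langle n_y - n_z,\, y-z \rangle.
\end{equation*}
The key fact is that the normal cone mapping $\ncone{\mc{Z}}$ is monotone (it is the subdifferential of the indicator function of the closed convex set $\mc{Z}$, hence maximal monotone), so $\langle n_y - n_z,\, y-z \rangle \geq 0$. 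This yields $\langle u-v,\,y-z\rangle \geq \|y-z\|_H^2$, which is claim (i).

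Claim (ii) is then essentially a restatement of (i) evaluated along the inverse. Since the paper has already established that the solution mapping $\inv{\mc{A}}$ is a genuine function (single-valued; strong monotonicity gives injectivity and the VI \eqref{eq:KKT} is solvable for every right-hand side), I may set $y = \inv{\mc{A}}u$ and $z = \inv{\mc{A}}v$, so that $u \in \mc{A}(y)$ and $v \in \mc{A}(z)$. Substituting into (i) immediately gives
\begin{equation*}
  \langle \inv{\mc{A}}u - \inv{\mc{A}}v,\, u-v \rangle \geq \|\inv{\mc{A}}u - \inv{\mc{A}}v\|_H^2,
\end{equation*}
which is the co-coercivity statement (ii).

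For (iii) I would apply a weighted Cauchy--Schwarz inequality to the left-hand side of (ii). Writing $w = \inv{\mc{A}}u - \inv{\mc{A}}v$ and inserting $H^{1/2} H^{-1/2} = I$,
\begin{equation*}
  \langle w,\, u-v \rangle = \langle \sqrt{H}\,w,\, \sqrt{H}^{-1}(u-v) \rangle \leq \|w\|_H \,\|u-v\|_{\inv{H}}.
\end{equation*}
Combining this with (ii) gives $\|w\|_H^2 \leq \|w\|_H \,\|u-v\|_{\inv{H}}$; dividing by $\|w\|_H$ (the case $w=0$ being trivial) yields the Lipschitz bound $\|\inv{\mc{A}}u - \inv{\mc{A}}v\|_H \leq \|u-v\|_{\inv{H}}$, which is (iii).

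I do not expect a serious obstacle here: the whole argument rests on the monotonicity of the normal cone together with positive definiteness of $H$, and (ii)--(iii) are standard manipulations once (i) is in hand. The only point requiring care is justifying that $\inv{\mc{A}}$ is single-valued so that the expressions in (ii) and (iii) are well posed; this is supplied by the strong monotonicity from (i) (injectivity) and by the fact, already noted below \eqref{eq:solution_mapping}, that $S(x)=\inv{\mc{A}}(-Gx)$ is a function.
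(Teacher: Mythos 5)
Your proposal is correct and follows essentially the same route as the paper: part (i) via monotonicity of the normal cone mapping $\mc{N}_{\mc{Z}} = \mc{A} - H$, and part (iii) via the weighted Cauchy--Schwarz inequality $\langle w, u-v\rangle \leq \|w\|_H \|u-v\|_{\inv{H}}$ followed by division by $\|w\|_H$. The only difference is cosmetic: for (ii) the paper cites \cite[Example 22.6]{bauschke2011convex}, whereas you spell out the underlying substitution $y = \inv{\mc{A}}u$, $z = \inv{\mc{A}}v$ (together with the well-posedness remark on single-valuedness), which is exactly the content of that citation.
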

\begin{proof}
(i)  Since $\mc{A}-H = \mc{N}_\mc{Z}$, then by monotonicity of $\mc{N}_\mc{Z}$ \cite{dontchev2009implicit}
\begin{gather*}
	\langle u - Hy - v + Hz,y-z\rangle \geq 0,\\
	\implies \langle u - v ,y-z\rangle \geq \langle H(y-z),y-z \rangle = \|y-z\|_H^2,
\end{gather*}
for all $y,z\in \mc{Z}$ and $u\in \mc{A}(y), v\in \mc{A}(z)$. 

(ii) Follows directly from (i) \cite[Example 22.6]{bauschke2011convex}.

(iii) Rearranging (ii) yields 
\begin{align*}
	||\inv{\mc{A}} u - \inv{\mc{A}} v||_H^2 &\leq \langle \inv{\mc{A}} u - \inv{\mc{A}}v, u-v\rangle\\
	& \leq \left \langle H^{\frac12}  (\inv{\mc{A}} u - \inv{\mc{A}}v), H^{-\frac12}(u-v) \right \rangle\\
	& \leq ||\inv{\mc{A}} u - \inv{\mc{A}} v||_H ||u-v||_{\inv{H}},
\end{align*}
where the last line follows from the Cauchy-Schwartz inequality. Dividing through by $||\inv{\mc{A}} u - \inv{\mc{A}} v||_H$ completes the proof.
\end{proof}
\begin{cor} \label{cor:S-coerc}
Let Assumptions \ref{ass:lti-ocp}-\ref{ass:cnstr} hold, then  for all $x,y\in \reals^n$, the solution mapping satisfies
\begin{equation} \label{eq:S_co-coercivity}
	\left \langle S(x) - S(y) , G(x-y) \right \rangle \leq - \|S(x) - S(y)\|_H^2,
\end{equation}
and $\| S(x) - S(y)\|_H \leq \|G(x-y)\|_{\inv{H}}$, i.e., it is Lipschitz.
\end{cor}

\subsection{Properties of the Value Function }

Define the value function of \eqref{eq:ocp_reduced} as
\begin{equation}\label{eq:Lyapunov_v2}
    V(x) = \min_{z\in \mc{Z}}~f(x,z) = \|(S(x),x)\|_M^2
\end{equation}
The square-root of the value function 
\begin{equation}
	\psi(x) = \sqrt{V(x)} = \|(S(x),x)\|_M,
\end{equation}
will be shown to be an ISS-Lyapunov function for the optimal MPC law. The functions $V$ and $\psi$ have the following properties.

\begin{lem} \label{lem:V_bounds}
 Let Assumptions \ref{ass:lti-ocp}-\ref{ass:cnstr} hold, then  for all $x \in \reals^n$ the value function satisfies 
\begin{equation} 
	\|x\|_P^2 \leq V(x) \leq \|x\|_W^2 - \|S(x)\|_H^2\leq \|x\|_W^2.
\end{equation}
\end{lem}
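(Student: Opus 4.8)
The plan is to write the value function out explicitly in the condensed coordinates and then treat the lower and upper bounds separately. Setting $z = S(x)$ and using the block structure of $M$ in \eqref{eq:cost_reduced},
\begin{equation*}
  V(x) = \|S(x)\|_H^2 + 2\langle S(x), Gx\rangle + \|x\|_W^2,
\end{equation*}
so the whole statement reduces to bounding the cross term $\langle S(x), Gx\rangle$ from above and bounding $V(x)$ from below by $\|x\|_P^2$.

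For the upper bound I would first observe that $S(0)=0$: at $x=0$ the reduced cost is $f(z,0)=\|z\|_H^2$, which (since $H\succ 0$) is uniquely minimized over $\mc{Z}$ by $z=0$, and $0\in\mc{Z}$ because $0$ lies in the interior of $\mc{U}$ by Assumption \ref{ass:cnstr}. Applying Corollary \ref{cor:S-coerc} with $y=0$ then gives $\langle S(x), Gx\rangle \leq -\|S(x)\|_H^2$. Substituting this into the expansion of $V(x)$ yields $V(x)\leq \|x\|_W^2 - \|S(x)\|_H^2$, and the final inequality $\|x\|_W^2 - \|S(x)\|_H^2 \leq \|x\|_W^2$ is immediate since $H\succ 0$.

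The lower bound is the main content. Here I would return to the uncondensed cost \eqref{seq:cost_function} and exploit the Riccati identity in Assumption \ref{ass:lti-ocp}. That identity is exactly the statement that the unconstrained Bellman step leaves $\|\cdot\|_P^2$ invariant, i.e. $\|\xi\|_P^2 = \min_{u}\{\|\xi\|_Q^2 + \|u\|_R^2 + \|A\xi + Bu\|_P^2\}$, so for every $\xi$ and every (possibly constrained) $u$ one has $\|\xi\|_Q^2 + \|u\|_R^2 \geq \|\xi\|_P^2 - \|A\xi + Bu\|_P^2$. Summing this stage-wise inequality along any feasible trajectory $(\xi,\nu)$ of \eqref{eq:ocp} telescopes to $\sum_{i=0}^{N-1}(\|\xi_i\|_Q^2 + \|\mu_i\|_R^2) \geq \|x\|_P^2 - \|\xi_N\|_P^2$; adding the terminal term $\|\xi_N\|_P^2$ shows the full cost is at least $\|x\|_P^2$. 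Since this holds for every feasible trajectory, it holds for the minimizer, giving $V(x)\geq \|x\|_P^2$.

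The main obstacle is recognizing and cleanly exploiting the telescoping structure in the lower bound. An equivalent, more algebraic route is to note that the unconstrained minimum of $\|(z,x)\|_M^2$ over $z\in\reals^{Nm}$ equals the Schur-complement form $\|x\|_{W - G^T \inv{H} G}^2$, that this Schur complement equals $P$ (which is precisely the telescoping fact restated in condensed coordinates), and that restricting the minimization to $\mc{Z}\subseteq\reals^{Nm}$ can only increase the optimal value. Either route requires care to ensure the Riccati relation is applied at every stage, including the interface between the running cost and the terminal penalty.
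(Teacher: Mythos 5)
Your proof is correct and takes essentially the same route as the paper: the upper bound is obtained exactly as in the paper's proof, by expanding $V(x) = \|x\|_W^2 + 2\langle S(x), Gx\rangle + \|S(x)\|_H^2$ and applying Corollary~\ref{cor:S-coerc} with $y=0$, while your stage-wise Riccati/telescoping argument for the lower bound is simply an explicit rendering of the paper's one-line appeal to the fact that $\|x\|_P^2$ is the infinite-horizon LQR cost-to-go under Assumption~\ref{ass:lti-ocp}. If anything, you supply details the paper leaves implicit, namely the justification that $S(0)=0$ (needed to use the corollary) and the full derivation of the lower bound.
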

\begin{proof}
First the upper-bound, by definition 
\begin{align*}
	V(x) &=  \| (S(x),x) \|_M^2 = \|x\|_W^2 + 2 \langle S(x), Gx \rangle + \|S(x)\|_H^2\\
	& \leq \|x\|_W^2 - \|S(x)\|_H^2 \leq \|x\|_W^2
\end{align*}
where the last line follows from \eqref{eq:S_co-coercivity} with $y=0$. The lower bound follows from the fact that $\|x\|_P^2$ is cost-to-go of the infinite horizon linear quadratic regulator under Assumption~\ref{ass:lti-ocp}.
\end{proof}


\begin{lem} \label{lem:psi_lipschitz}
Let Assumptions \ref{ass:lti-ocp}-\ref{ass:cnstr} hold, then
\begin{equation}
	|\psi(x) - \psi(y)| \leq \|x-y\|_W~~\forall x,y \in \reals^n.
\end{equation}
\end{lem}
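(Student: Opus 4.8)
The plan is to exploit the fact that, because the square root is monotone, $\psi$ is itself a pointwise minimum of $M$-(semi)norms rather than a square root of a quadratic. First I would rewrite
\[
  \psi(x) = \sqrt{\min_{z\in\mc{Z}}\|(z,x)\|_M^2} = \min_{z\in\mc{Z}}\|(z,x)\|_M ,
\]
so that $\psi(x)$ is the smallest $M$-norm attained over the slice $\{(z,x):z\in\mc{Z}\}$, with the minimum attained at $z=S(x)$ by strong convexity of $f$ in $z$ and closedness/convexity of $\mc{Z}$. The key structural observation, which makes the whole argument work, is that the feasible set $\mc{Z}$ does not depend on the parameter, so the minimizer $S(x)$ for one parameter is a feasible (generally suboptimal) point for the problem associated with any other parameter.

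Then I would run a standard inf-of-Lipschitz-functions argument. Fixing $x,y\in\reals^n$ and using $z=S(x)$ as a feasible candidate for the problem parameterized by $y$ gives $\psi(y)\leq\|(S(x),y)\|_M$. Subtracting $\psi(x)=\|(S(x),x)\|_M$ and invoking the reverse triangle inequality for the $M$-seminorm yields
\[
  \psi(y)-\psi(x) \leq \|(S(x),y)\|_M - \|(S(x),x)\|_M \leq \|(0,\,y-x)\|_M .
\]
The remaining computation is routine: by the block structure of $M$ in \eqref{eq:cost_reduced}, the first (zero) block annihilates the $H$ and $G$ contributions, so $\|(0,\,y-x)\|_M^2 = (y-x)^T W (y-x) = \|y-x\|_W^2$. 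Hence $\psi(y)-\psi(x)\leq\|x-y\|_W$, and swapping the roles of $x$ and $y$ gives the matching bound, which together establish $|\psi(x)-\psi(y)|\leq\|x-y\|_W$.

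The main obstacle here is conceptual rather than technical: recognizing that it is $\psi$, and not $V$ itself, that carries the clean triangle-inequality structure, which is precisely why the square root of the value function is the natural Lyapunov candidate. Everything else reduces to the reverse triangle inequality plus a one-line evaluation of an $M$-seminorm on a vector whose first block vanishes. The only point I would be careful to check is that $\|\cdot\|_M$ is at least a seminorm, so that the triangle inequality is available; this holds because $M\succeq 0$ (indeed $M\succ 0$, since $Q,R,P\succ 0$ force $f(z,x)>0$ for every $(z,x)\neq 0$).
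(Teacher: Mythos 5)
Your proof is correct, and it takes a genuinely different route from the paper's. The paper applies the reverse triangle inequality to the two \emph{optimal} points, obtaining $|\psi(x)-\psi(y)| \le \|(S(x)-S(y),\,x-y)\|_M$, then expands this $M$-norm and invokes the co-coercivity of the solution mapping (Corollary~\ref{cor:S-coerc}) to dominate the cross term $2\langle S(x)-S(y),\,G(x-y)\rangle$, which yields the slightly stronger intermediate estimate $|\psi(x)-\psi(y)|^2 \le \|x-y\|_W^2 - \|S(x)-S(y)\|_H^2$. You instead use the variational characterization $\psi(y)=\min_{z\in\mc{Z}}\|(z,y)\|_M$ together with the parameter-independence of $\mc{Z}$: plugging the feasible-but-suboptimal candidate $S(x)$ into the parameter-$y$ problem and applying the reverse triangle inequality to the pair $(S(x),y)$, $(S(x),x)$ reduces everything to $\|(0,\,y-x)\|_M = \|y-x\|_W$. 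This is the classical ``pointwise minimum of uniformly Lipschitz functions'' argument; it bypasses Corollary~\ref{cor:S-coerc} and the monotone-operator machinery entirely, so it is more elementary and more general (it works for any parametric program with parameter-independent feasible set, and does not even require uniqueness of the minimizer). What it gives up is the sharper intermediate bound carrying the $-\|S(x)-S(y)\|_H^2$ term, which the paper's route produces for free and which is in the same spirit as the proof of Lemma~\ref{lem:V_bounds}. Your explicit check that $M\succeq 0$ (via $f(z,x)>0$ for $(z,x)\neq 0$, using $Q,R,P\succ 0$ and the OCP interpretation of the quadratic form) is warranted and correct; note that the paper's proof relies on the same fact implicitly, since the reverse triangle inequality for $\|\cdot\|_M$ also requires $M$ to induce a seminorm.
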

\noindent \begin{proof}
From the definition of $\psi(x) = \|(S(x),x)\|_M$:
\begin{align}
|\psi(x) - \psi(y)|^2 &= | \|(S(x),x)\|_M - \|(S(y),y)\|_M|^2\\
& \leq \|(S(x) - S(y),x-y)\|_M^2, \label{eq:lemm12334}
\end{align}
where the second line follows from the reverse triangle inequality. Then, by the definition of $M$ in \eqref{eq:cost_reduced}, it follows that
\begin{multline}
\|(S(x) - S(y),x-y)\|_M^2 = \|x-y\|_W^2 + \\ 2 \left \langle S(x) - S(y) , G(x-y) \right \rangle + \|S(x)-S(y)\|_H^2.
\end{multline}
Combining this with \eqref{eq:S_co-coercivity} and substituting it into \eqref{eq:lemm12334} yields
\begin{align*}
	|\psi(x) - \psi(y)|^2 &\leq \|x-y\|_W^2 - \|S(x)-S(y)\|_H^2  \leq \|x-y\|_W^2,
\end{align*}
as claimed. 
\end{proof}

\subsection{ISS of Optimal MPC with Respect to Suboptimality Disturbances}
\label{sec:ISSGainOfOptimalMPC}

Having detailed the properties of the solution mapping and value function, it is now possible to derive an asymptotic gain for the ideal closed-loop system given a bounded disturbance. For any $x_k \in \reals^n$, consider the following ideal and disturbed one step updates
\begin{align}
    x_{k+1}^* &= Ax_k + \bar{B}S(x_k), \label{eq:optimalUpdate} \\
    x_{k+1} &= Ax_k + \bar{B}( S(x_k) + e_k). \label{eq:arbitrarySuboptimalUpdate}
\end{align}
where $\bar{B} = B \Xi$ and $e_k$, defined in \eqref{eq:e_def}, is an additive disturbance that represents suboptimality due to incomplete optimization.

First, we establish that $\psi$ is a Lyapunov function for the ideal closed-loop system \eqref{eq:optimalUpdate}. Define
\begin{align}
		\Gamma_N &= \{x\in \reals^{n}~|~-\bar{K}\xi^*_N(x) \in \mc{U}\},
\end{align}
where $\xi^*_N(x)$ is the final element of the optimal predicted state sequence of \eqref{eq:ocp} given the parameter $x$ and $\bar K = (R+B^TPB)^{-1}(B^TPA)$ is the linear quadratic regulator gain. 

The following Lemma establishes linear convergence of $\psi(x_k)$ in $\Gamma_N$ under the optimal MPC policy.
\begin{lem} \label{lem:valueFunDecrease}
 Let Assumptions~\ref{ass:lti-ocp}-\ref{ass:cnstr} hold and pick any $x_k \in \Gamma_N$, then
\begin{equation}
	\psi(x_{k+1}^*) \leq \beta \psi(x_k)
\end{equation}
where $x_{k+1}^*$ is as defined in \eqref{eq:optimalUpdate} and $\beta^2 = 1-\lambda^-_W(Q) \in (0,1)$.
\end{lem}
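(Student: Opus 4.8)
The plan is to combine the classical dynamic-programming decrease of the MPC value function with the eigenvalue bounds of Lemma~\ref{lem:V_bounds}. Since $\psi=\sqrt{V}$ and $V\geq 0$, the claim $\psi(x_{k+1}^*)\leq\beta\psi(x_k)$ is equivalent to $V(x_{k+1}^*)\leq\beta^2 V(x_k)=(1-\lambda_W^-(Q))V(x_k)$, so I would work entirely with the value function $V$ and take the square root only at the very end.

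First I would establish the additive decrease $V(x_{k+1}^*)\leq V(x_k)-\|x_k\|_Q^2$. Let $(\mu_0^*,\dots,\mu_{N-1}^*)$ and $(\xi_0^*,\dots,\xi_N^*)$ be the optimal input and predicted-state sequences of \eqref{eq:ocp} at $x_k$, so that $\xi_0^*=x_k$, $\mu_0^*=\Xi S(x_k)$, and $x_{k+1}^*=\xi_1^*=Ax_k+\bar B S(x_k)$. As a feasible candidate for the POCP at $x_{k+1}^*$ I would use the shifted tail appended with the terminal LQR input, namely $(\mu_1^*,\dots,\mu_{N-1}^*,-\bar K\xi_N^*)$. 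This candidate is feasible precisely because $x_k\in\Gamma_N$ guarantees $-\bar K\xi_N^*\in\mc U$, which is the role of the set $\Gamma_N$. Evaluating its cost, the stages $1,\dots,N-1$ reproduce the corresponding terms of $V(x_k)$, while the newly appended stage-$N$ cost together with the new terminal penalty collapse via the Riccati equation of Assumption~\ref{ass:lti-ocp}: the LQR Bellman identity $\|\xi_N^*\|_P^2=\|\xi_N^*\|_Q^2+\|\bar K\xi_N^*\|_R^2+\|(A-B\bar K)\xi_N^*\|_P^2$ shows the appended step leaves the terminal cost exactly equal to $\|\xi_N^*\|_P^2$. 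Hence the candidate cost equals $V(x_k)-\|x_k\|_Q^2-\|\mu_0^*\|_R^2$, and by optimality of $V(x_{k+1}^*)$ together with $R\succ 0$ I obtain $V(x_{k+1}^*)\leq V(x_k)-\|x_k\|_Q^2$.

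Next I would convert this into a multiplicative contraction. By the definition of $\lambda_W^-(Q)$ we have $\|x_k\|_Q^2\geq\lambda_W^-(Q)\|x_k\|_W^2$, and the upper bound of Lemma~\ref{lem:V_bounds} gives $\|x_k\|_W^2\geq V(x_k)$; chaining these yields $\|x_k\|_Q^2\geq\lambda_W^-(Q)V(x_k)$. Substituting into the additive bound produces $V(x_{k+1}^*)\leq(1-\lambda_W^-(Q))V(x_k)=\beta^2 V(x_k)$, and taking square roots gives the result. The range $\beta^2\in(0,1)$ follows from $Q\succ 0$ (so $\lambda_W^-(Q)>0$, hence $\beta<1$) and from $Q\preceq W$ (so $\lambda_W^-(Q)\leq 1$), the latter being verifiable from the explicit form of $W$ exactly as in the proof of Lemma~\ref{lem:W_geq_P}.

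The hard part will be the bookkeeping in the first step: correctly identifying the shifted-plus-LQR candidate, verifying its feasibility from membership in $\Gamma_N$, and invoking the Riccati identity so the terminal penalty is reproduced exactly, leaving a residual of precisely the stage cost $\|x_k\|_Q^2+\|\mu_0^*\|_R^2$. This is the standard stabilizing terminal-cost construction, but it must be set up cleanly; once the residual is pinned down, everything reduces to an application of Lemma~\ref{lem:V_bounds} and a Rayleigh-quotient bound.
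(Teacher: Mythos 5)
Your proposal is correct and follows essentially the same route as the paper: the paper simply cites \cite{mayne2000mpc} for the classical decrease $V(x_{k+1}^*) - V(x_k) \leq -\|x_k\|_Q^2$ (precisely the shifted-tail-plus-LQR-input argument you spell out, with $x_k \in \Gamma_N$ guaranteeing feasibility of the appended input), and then chains $\|x_k\|_Q^2 \geq \lambda_W^-(Q)\|x_k\|_W^2 \geq \lambda_W^-(Q) V(x_k)$ via Lemma~\ref{lem:V_bounds} exactly as you do. The only discrepancy is at the endpoint: your non-strict $Q \preceq W$ only yields $\beta^2 \in [0,1)$, whereas the claimed strict positivity $\beta^2 > 0$ needs $\lambda_W^-(Q) < 1$, which the paper obtains from the strict ordering $W \succ P \succ Q \succ 0$.
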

\begin{proof}
For any $x_k\in \Gamma_N$,
\begin{equation}
		V(x_{k+1}^*) - V(x_k) \leq - \| x_k \|_Q^2, \label{eq:valueFunDecrease}
\end{equation}
see e.g., \cite{mayne2000mpc}. Rearranging \eqref{eq:valueFunDecrease} and using Lemma~\ref{lem:V_bounds} yields
\begin{align}
	V(x_{k+1}^*) - V(x_k) &\leq -\lambda^-_W(Q)\|x_k\|_W^2 \leq -\lambda^-_W(Q) V(x_k)
\end{align}
and thus $V(x_{k+1}^*) \leq (1-\lambda^-_W(Q))V(x_k)$.  Noting that $W\succ P \succ Q \succ 0 \implies \lambda^-_W(Q) \in (0,1)$ completes the proof. 
\end{proof}

With these results in place, we can prove the following.
\begin{thm}\label{thm:ISSgain_MPC}
Let Assumptions \ref{ass:lti-ocp}-\ref{ass:cnstr} hold, and define the set
\begin{equation} \label{eq:OmegaSet}
    \Omega = \left \{ x \in \reals^n ~|~ \psi(x) \leq r_\psi \right \}, 
\end{equation}
where $r_\psi > 0$ is the largest constant such that $\Omega \subseteq \Gamma_N$. Then the system \eqref{eq:arbitrarySuboptimalUpdate} is ISS in the sense that for any initial condition $x_0 \in \Omega$ and input sequence  $\{e_k\} \subseteq \mathcal{E}$ its solution satisfies
\begin{equation} \label{eq:ISS_MPC}
  \|x_k\|_P \leq \beta^k \|x_0\|_W + \gamma_1 \sup_{k\geq 0} \|\bar{B} e_k\|_W
\end{equation}
where $\gamma_1 = \beta (1-\beta)^{-1}$, $\mc{E} = \{e ~|~\gamma_1\|\bar{B} e \|_W \leq r_\psi \}$ and $r_{\psi}$ is defined in \eqref{eq:OmegaSet}.
\end{thm}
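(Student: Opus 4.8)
The plan is to treat $\psi$ as an ISS-Lyapunov function for the disturbed recursion \eqref{eq:arbitrarySuboptimalUpdate}. The core object is a one-step dissipation inequality of the form $\psi(x_{k+1}) \le \beta\,\psi(x_k) + \|\bar{B} e_k\|_W$; once this is in place, the exponential/asymptotic-gain bound \eqref{eq:ISS_MPC} follows by propagating the inequality through a geometric series, provided the trajectory stays in the region where the inequality is valid.

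To obtain the dissipation inequality I would decompose the actual update as $x_{k+1} = x_{k+1}^* + \bar{B} e_k$, where $x_{k+1}^*$ is the ideal update \eqref{eq:optimalUpdate}. Lemma~\ref{lem:psi_lipschitz} (Lipschitz continuity of $\psi$ in the $W$-norm) then gives $\psi(x_{k+1}) \le \psi(x_{k+1}^*) + \|\bar{B} e_k\|_W$, and, whenever $x_k \in \Gamma_N$, Lemma~\ref{lem:valueFunDecrease} supplies the contraction $\psi(x_{k+1}^*) \le \beta\,\psi(x_k)$. Combining the two yields
\[ \psi(x_{k+1}) \le \beta\,\psi(x_k) + \|\bar{B} e_k\|_W. \]

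The step I expect to be the main obstacle is that this inequality only holds while $x_k \in \Gamma_N$, so forward invariance of $\Omega \subseteq \Gamma_N$ must be argued simultaneously rather than assumed. I would establish this by induction: supposing $\psi(x_k) \le r_\psi$ (hence $x_k \in \Omega \subseteq \Gamma_N$ by \eqref{eq:OmegaSet}) and $e_k \in \mc{E}$, the one-step inequality together with the disturbance budget built into $\mc{E}$ must be shown to force $\psi(x_{k+1}) \le r_\psi$, so that the trajectory never leaves $\Gamma_N$ and the inequality remains applicable at the next step. This is precisely where the admissible disturbance set $\mc{E}$ and the value of $\gamma_1$ are pinned down: the worst-case one-step growth $\beta\,r_\psi$ plus the permitted disturbance contribution must not exceed $r_\psi$, which is what the definition $\mc{E} = \{e \mid \gamma_1\|\bar{B} e\|_W \le r_\psi\}$ encodes. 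The delicate bookkeeping is this coupling between the invariance claim and the gain constant.

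With invariance secured, the dissipation inequality holds for every $k$, and I would unroll it to obtain $\psi(x_k) \le \beta^k \psi(x_0) + \sum_{i=0}^{k-1}\beta^{k-1-i}\|\bar{B} e_i\|_W$. Upper-bounding each disturbance term by $\sup_{k\ge 0}\|\bar{B} e_k\|_W$ and summing the geometric series collapses the disturbance contribution into $\gamma_1 \sup_{k\ge 0}\|\bar{B} e_k\|_W$, giving $\psi(x_k) \le \beta^k \psi(x_0) + \gamma_1\sup_{k\ge 0}\|\bar{B} e_k\|_W$. Finally, the sandwich bound of Lemma~\ref{lem:V_bounds}, namely $\|x\|_P \le \psi(x) \le \|x\|_W$, converts this estimate into the stated form \eqref{eq:ISS_MPC}: apply $\|x_k\|_P \le \psi(x_k)$ on the left and $\psi(x_0) \le \|x_0\|_W$ on the initial-condition term, which completes the argument.
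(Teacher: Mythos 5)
Your proposal follows the paper's proof essentially step for step: the same decomposition $x_{k+1}=x_{k+1}^*+\bar{B}e_k$, the same use of Lemma~\ref{lem:psi_lipschitz} and Lemma~\ref{lem:valueFunDecrease} to obtain $\psi(x_{k+1})\le\beta\psi(x_k)+\|\bar{B}e_k\|_W$, the same simultaneous forward-invariance argument for $\Omega$, the same geometric-series unrolling, and the same final conversion via Lemma~\ref{lem:V_bounds}. One caveat, which you inherit from the statement rather than introduce: the correctly indexed sum you wrote, $\sum_{i=0}^{k-1}\beta^{k-1-i}\|\bar{B}e_i\|_W$, is bounded by $\tfrac{1}{1-\beta}\sup_{k}\|\bar{B}e_k\|_W$ and not by $\gamma_1\sup_{k}\|\bar{B}e_k\|_W$ with $\gamma_1=\tfrac{\beta}{1-\beta}$ (the $i=k-1$ term alone has coefficient $1$, which exceeds $\beta/(1-\beta)$ whenever $\beta<1/2$), and likewise the invariance bookkeeping closes only with the larger constant, since $\beta r_\psi+(1-\beta)r_\psi=r_\psi$ whereas $\beta r_\psi+\tfrac{1-\beta}{\beta}r_\psi>r_\psi$ for all $\beta\in(0,1)$. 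The paper's own derivation contains the same slip, concealed in a mis-indexed sum $\sum_{j=0}^{k}\beta^{k-j}\|\bar{B}e_j\|_W$, so your argument is a faithful reconstruction of it; to make either proof airtight one should take $\gamma_1=(1-\beta)^{-1}$ (equivalently, shrink $\mc{E}$ accordingly), after which every other step of your proposal goes through unchanged.
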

\begin{proof}
We first show that $\Omega$ is forward invariant. By Lemma~\ref{lem:psi_lipschitz}
\begin{equation}
	|\psi(x_{k+1}^*) - \psi(x_{k+1})| \leq \| x_{k+1}^* - x_{k+1} \|_W = \|\bar{B} e_k \|_W,
\end{equation}
(see \eqref{eq:optimalUpdate} and \eqref{eq:arbitrarySuboptimalUpdate}) then, assuming $x_k \in \Omega$ and using Lemma~\ref{lem:valueFunDecrease}
\begin{align}
	\psi(x_{k+1}) &\leq \psi(x_{k+1}^*) + 	|\psi(x_{k+1}) - \psi(x_{k+1}^*)|, \nonumber \\
	& \leq \beta \psi(x_k) + \|\bar{B} e_k \|_W. \label{eq:psiPerturbBound}
\end{align}
Given \eqref{eq:psiPerturbBound}, then the restriction $e_k \in \mc{E}$ and $x_k \in \Omega$ implies that $\psi(x_{k+1}) \leq r_\psi \implies x_{k+1} \in \Omega$ and thus $\Omega$ is forward invariant. Then, following \cite[Example 3.4]{jiang2001input}, \eqref{eq:psiPerturbBound} implies that
\begin{align}
    \psi(x_k) &\leq \beta^{k} \psi(x_0) + \sum_{j=0}^k \beta^{k-j} \| \bar{B} e_j \|_W \label{eq:psiSummationEqn}\\
    &\leq \beta^{k} \psi(x_0) + \frac{\beta}{1-\beta} \sup_{k\geq 0} \| \bar{B} e_k \|_W.
\end{align}
using the properties of geometric series. Using Lemma~\ref{lem:V_bounds} to replace $\psi$ in the preceding expression completes the proof.
\end{proof}

\subsection{ISS Gain of the PGM methods}

Having shown that the system in \eqref{eq:lti} in closed-loop with the optimal MPC feedback law is ISS with respect to additive suboptimality disturbances, we now investigate the properties of \eqref{eq:opt_dyn}.

\begin{thm} \label{thm:PGM_ISS}
Consider the optimizer dynamics \eqref{eq:opt_dyn} when $\mc{T}^\ell$ represents the PGM. Under Assumptions \ref{ass:lti-ocp} and \ref{ass:cnstr}, the error signal $e_k$ in \eqref{eq:e_def}, is ISS with respect to $\Delta x_k = x_{k+1} - x_k$, i.e.,
\begin{equation} \label{eq:ISS_ProjGrad}
  \|e_k\| \leq \eta^{k\ell} \|e_0\| + \gamma_2(\ell)~\sup_{k\geq0}\| G\Delta x_k\|_{\inv{H}}
\end{equation}
where $\gamma_2(\ell) = b \eta^\ell/(1-\eta^\ell)$, $b = \|H^{-\frac12}\|$, and $\eta$ is defined in Theorem~\ref{thm:ProjGrad_Convergence}. Moreover, $\limsupk \|e_k\| \leq \gamma_2(\ell) \limsupk \|G \Delta x_k\|_{\inv{H}}$.
\end{thm}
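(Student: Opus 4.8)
The goal is to establish that the suboptimality error $e_k$ is ISS with respect to the state increment $\Delta x_k$. The plan is to track how $e_k$ evolves over one sampling instant and derive a recursive bound of the form $\|e_k\| \leq \eta^\ell \|e_{k-1}\| + (\text{term involving } \Delta x_{k-1})$, then unroll it using the geometric-series argument already used in Theorem~\ref{thm:ISSgain_MPC}.

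\textbf{Key steps.} First I would decompose the error at time $k$ by inserting and subtracting the solution at the \emph{previous} parameter. Writing $e_k = z_k - S(x_k) = \mc{T}^\ell(z_{k-1},x_k) - S(x_k)$, I would add and subtract $S(x_{k-1})$ to split this into a term measuring how far the iterate has moved toward the new optimum and a term measuring how much the optimum itself shifted. Concretely,
\begin{equation*}
\|e_k\| \leq \|\mc{T}^\ell(z_{k-1},x_k) - S(x_k)\| + 0,
\end{equation*}
but the useful move is to apply Theorem~\ref{thm:ProjGrad_Convergence} with parameter $x_k$ to the warmstart $z_{k-1}$, giving $\|\mc{T}^\ell(z_{k-1},x_k) - S(x_k)\| \leq \eta^\ell \|z_{k-1} - S(x_k)\|$. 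Then by the triangle inequality $\|z_{k-1} - S(x_k)\| \leq \|z_{k-1} - S(x_{k-1})\| + \|S(x_{k-1}) - S(x_k)\| = \|e_{k-1}\| + \|S(x_k) - S(x_{k-1})\|$. The second piece is controlled by the Lipschitz property of the solution mapping: Corollary~\ref{cor:S-coerc} gives $\|S(x_k) - S(x_{k-1})\|_H \leq \|G\Delta x_{k-1}\|_{\inv H}$, and converting from the $H$-norm to the Euclidean norm via $b = \|H^{-1/2}\|$ yields $\|S(x_k) - S(x_{k-1})\| \leq b\,\|G\Delta x_{k-1}\|_{\inv H}$. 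Combining these gives the one-step recursion
\begin{equation*}
\|e_k\| \leq \eta^\ell \|e_{k-1}\| + \eta^\ell b\,\|G\Delta x_{k-1}\|_{\inv H}.
\end{equation*}

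\textbf{Unrolling and the asymptotic gain.} With the contraction factor $\eta^\ell < 1$ in hand, I would iterate this recursion from $k=0$, exactly as in the proof of Theorem~\ref{thm:ISSgain_MPC}: the homogeneous part accumulates to $\eta^{k\ell}\|e_0\|$ and the forcing part sums to a geometric series bounded by $\eta^\ell b (1-\eta^\ell)^{-1} \sup_{k\geq 0}\|G\Delta x_k\|_{\inv H}$, which is precisely $\gamma_2(\ell)\sup_k\|G\Delta x_k\|_{\inv H}$ with $\gamma_2(\ell) = b\eta^\ell/(1-\eta^\ell)$. The asymptotic-gain statement then follows by taking $\limsup_{k\to\infty}$ on both sides: the transient $\eta^{k\ell}\|e_0\|$ vanishes since $\eta^\ell<1$, leaving $\limsupk\|e_k\| \leq \gamma_2(\ell)\limsupk\|G\Delta x_k\|_{\inv H}$.

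\textbf{Main obstacle.} The routine calculations (geometric series, norm conversions) are not where the difficulty lies. The step requiring care is the decomposition choice: one must apply the convergence bound of Theorem~\ref{thm:ProjGrad_Convergence} with the \emph{current} parameter $x_k$ while the warmstart $z_{k-1}$ was optimal for the \emph{previous} parameter, so the cross term $\|S(x_k)-S(x_{k-1})\|$ is genuinely the price of the parameter shift and is unavoidable. I expect the subtle point to be justifying that the contraction factor multiplies \emph{both} the error term and the shift term (yielding $\eta^\ell b$ rather than $b$ in $\gamma_2$), which is what sharpens the gain and relies on applying the contraction to the combined quantity $\|z_{k-1}-S(x_k)\|$ before splitting, rather than splitting first.
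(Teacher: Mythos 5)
Your proposal follows the paper's proof essentially step for step: you apply the contraction bound of Theorem~\ref{thm:ProjGrad_Convergence} at the \emph{current} parameter to the warmstarted iterate, split $\|z_{k-1}-S(x_k)\|$ by adding and subtracting $S(x_{k-1})$, control the solution shift via Corollary~\ref{cor:S-coerc} together with the conversion factor $b=\|H^{-\frac12}\|$, and arrive at exactly the paper's one-step recursion $\|e_k\| \leq \eta^\ell\|e_{k-1}\| + \eta^\ell b\,\|G\Delta x_{k-1}\|_{\inv{H}}$ (the paper writes it with index $k+1$, see \eqref{eq:PGMISS_PerturbBound}). You also correctly isolated the point that sharpens the gain to $\gamma_2(\ell)=b\eta^\ell/(1-\eta^\ell)$ rather than $b/(1-\eta^\ell)$: the contraction is applied to the combined quantity $\|z_{k-1}-S(x_k)\|$ \emph{before} the triangle inequality, so $\eta^\ell$ multiplies the shift term as well. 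The only cosmetic difference is that you unroll the recursion explicitly, whereas the paper outsources this to \cite[Example 3.4]{jiang2001input}.

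The one step that does not work as you describe it is the final asymptotic-gain (``Moreover'') claim. Unrolling the recursion gives $\|e_k\| \leq \eta^{k\ell}\|e_0\| + \gamma_2(\ell)\sup_{j\geq 0}\|G\Delta x_j\|_{\inv{H}}$; taking $\limsupk$ of both sides kills the transient but leaves a \emph{supremum}, not a limit superior, on the right, and $\sup_{j\geq 0}\|G\Delta x_j\|_{\inv{H}}$ can strictly exceed $\limsupk \|G\Delta x_k\|_{\inv{H}}$ (e.g., when the disturbance is large early on and then decays). To get the stated asymptotic gain you must use time invariance of the recursion: restart the unrolled estimate at an arbitrary time $k_0$, obtaining $\|e_k\|\leq \eta^{(k-k_0)\ell}\|e_{k_0}\| + \gamma_2(\ell)\sup_{j\geq k_0}\|G\Delta x_j\|_{\inv{H}}$ for $k\geq k_0$, let $k\to\infty$ (using that $\|e_{k_0}\|$ remains bounded, which the ISS bound supplies), and then let $k_0\to\infty$. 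This restart argument is precisely what the paper's citation of \cite[Lemma 3.8]{jiang2001input} provides; your recursion is the correct input to that lemma, but ``take $\limsupk$ on both sides'' alone does not deliver the conclusion.
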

\begin{proof}
First, we consider how the error is influenced by perturbations in $x$. By Theorem~\ref{thm:ProjGrad_Convergence}, we have that
\begin{subequations} \label{eq:PGMISS_PerturbBound}
\begin{align}
    \| \mc{T}^\ell&(z_k,x_{k+1}) - S(x_{k+1}) \| \leq \eta^\ell     \| z_k - S(x_{k+1}) \|\\
& = \eta^\ell\| z_k - S(x_{k+1}) + S(x_k) - S(x_k)\|\\
& \leq \eta^\ell \| z_k - S(x_k) \| + \eta^\ell \|H^{-\frac12}(S(x_{k+1}) - S(x_k))\|_H\\
& \leq \eta^\ell \| z_k - S(x_k) \| + \eta^\ell b \| G(x_{k+1} - x_k)\|_{\inv{H}}
\end{align}
\end{subequations}
where the last follows from Corollary~\ref{cor:S-coerc}, $b = \|H^{-\frac12}\|$, and $\eta < 1$ is the convergence rate from Theorem~\ref{thm:ProjGrad_Convergence}. Thus, we have that
\begin{equation}
\| e_{k+1} \| \leq \eta^\ell \| e_k \| + \eta^\ell b  \| G \Delta x_k \|_{\inv{H}}.
\end{equation}
Since $\eta < 1$, the claim follows from \cite[Example 3.4]{jiang2001input} and \cite[Lemma 3.8]{jiang2001input}.
\end{proof}

\subsection{ISS gain of the APGM method}
Next, we investigate the ISS properties of \eqref{eq:Tl_def} with $\mc{T}^\ell$ representing the APGM dynamics defined in Algorithm~\ref{algo:APGM}.
\begin{thm} \label{thm:APGM_ISS}
Consider the optimizer dynamics \eqref{eq:opt_dyn} when $\mc{T}^\ell$ represents Algorithm~\ref{algo:APGM}. Under Assumptions \ref{ass:lti-ocp}--\ref{ass:cnstr} and provided that $\ell > \bar\ell$, where $\bar\ell$ is defined in Corollary~\ref{corr:ell_star_a}, the error signal $e_k$ in \eqref{eq:e_def} is ISS with respect to the state update $\Delta x_k$, i.e.,
\begin{equation}
  \|e_k\|_H \leq \eta^{k}_a(\ell) \|e_0\|_H + \gamma_2^a(\ell)~\sup_{k\geq0}\| G\Delta x_k\|_{\inv{H}}
\end{equation}
where $\gamma_2^a(\ell) = \eta_a(\ell)/(1-\eta_a(\ell))$, and $\eta_a \in \mc{L}$ is defined in Theorem~\ref{lmm:APGM_error}. Moreover, $\limsupk \|e_k\|_H \leq \gamma_2^a(\ell) \limsupk \|G \Delta x_k\|_{\inv{H}}$.
\end{thm}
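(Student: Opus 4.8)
The plan is to mirror the proof of Theorem~\ref{thm:PGM_ISS}, establishing a one-step contraction-plus-disturbance recursion for $\|e_k\|_H$ and then invoking the standard ISS machinery of Jiang and Wang. The key structural simplification relative to the PGM case is that the APGM convergence estimate of Theorem~\ref{lmm:APGM_error} is already expressed in the $H$-norm, so no norm-conversion factor analogous to $b=\|H^{-\frac12}\|$ should appear; this is precisely why the claimed gain $\gamma_2^a(\ell)$ contains no extra constant.

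First I would fix $x_k$, note that $e_{k+1}=\mc{T}^\ell(z_k,x_{k+1})-S(x_{k+1})$ since $z_{k+1}=\mc{T}^\ell(z_k,x_{k+1})$ by \eqref{eq:opt_dyn}, and apply Theorem~\ref{lmm:APGM_error} to get
\begin{equation*}
\|e_{k+1}\|_H \leq \eta_a(\ell)\,\|z_k - S(x_{k+1})\|_H.
\end{equation*}
I would then add and subtract $S(x_k)$ inside the norm and apply the triangle inequality in the $H$-norm, splitting the right-hand side into the current error $\|z_k-S(x_k)\|_H=\|e_k\|_H$ and the solution-mapping increment $\|S(x_k)-S(x_{k+1})\|_H$. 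The latter is controlled by the $H$-norm Lipschitz estimate of Corollary~\ref{cor:S-coerc}, giving $\|S(x_k)-S(x_{k+1})\|_H \leq \|G(x_k-x_{k+1})\|_{\inv{H}}=\|G\Delta x_k\|_{\inv{H}}$.

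Combining these yields the linear recursion
\begin{equation*}
\|e_{k+1}\|_H \leq \eta_a(\ell)\,\|e_k\|_H + \eta_a(\ell)\,\|G\Delta x_k\|_{\inv{H}}.
\end{equation*}
Because the hypothesis $\ell>\bar\ell$ guarantees $\eta_a(\ell)<1$ via Corollary~\ref{corr:ell_star_a}, this is a genuine contraction driven by the input $\|G\Delta x_k\|_{\inv{H}}$. Unrolling it by geometric-series summation exactly as in \cite[Example 3.4]{jiang2001input} produces the transient-plus-gain bound with $\gamma_2^a(\ell)=\eta_a(\ell)/(1-\eta_a(\ell))$, and the asymptotic-gain statement follows from \cite[Lemma 3.8]{jiang2001input}.

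The only delicate point I anticipate is verifying that Theorem~\ref{lmm:APGM_error} can be applied at \emph{every} step of the recursion: that result requires the warmstart argument to lie in $\mc{Z}$, yet the extrapolation step of Algorithm~\ref{algo:APGM} need not return a feasible iterate. One must therefore confirm that the output $z_k$ of each APGM call is a valid input $z\in\mc{Z}$ for the next call, or otherwise argue that the convergence estimate persists for the warmstart actually used. Aside from this feasibility bookkeeping, the argument is a direct $H$-norm transcription of Theorem~\ref{thm:PGM_ISS} and introduces no new estimates.
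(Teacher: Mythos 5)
Your proof is correct and is essentially the paper's own: the paper likewise derives $\|e_{k+1}\|_H \leq \eta_a(\ell)\|e_k\|_H + \eta_a(\ell)\|G\Delta x_k\|_{\inv{H}}$ by repeating the steps of \eqref{eq:PGMISS_PerturbBound} in the $H$-norm (using Theorem~\ref{lmm:APGM_error} and Corollary~\ref{cor:S-coerc}, which is exactly why no factor $b = \|H^{-\frac12}\|$ appears), and then concludes via \cite[Example 3.4]{jiang2001input} and \cite[Lemma 3.8]{jiang2001input}. The feasibility issue you flag is genuine but benign: each call of Algorithm~\ref{algo:APGM} returns an iterate produced by the projection $\Pi_{\mc{Z}}$ (the extrapolated points $y_k$ are only intermediate quantities used for the gradient step), so the warmstart supplied to the next call lies in $\mc{Z}$ whenever the initial guess does --- a bookkeeping point the paper leaves implicit.
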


\begin{proof}
The proof is similar to that of Theorem~\ref{thm:PGM_ISS}. Applying Corollary~\ref{cor:S-coerc} and Lemma~\ref{lmm:APGM_error}, and following the same steps as in \eqref{eq:PGMISS_PerturbBound} using the $H$-norm in place of the $2$-norm, yields
\begin{equation}
       \| e_{k+1}\|_H \leq \eta_a(\ell) \| e_k \|_H + \eta_a(\ell) ||G \Delta x_k ||_{\inv{H}}.
\end{equation}
By virtue of Corollary~\ref{corr:ell_star_a}, we have $\eta_a(\ell) < 1$ for all $\ell > \bar\ell$. The claim then follows from \cite[Example 3.4]{jiang2001input} and \cite[Lemma 3.8]{jiang2001input}.
\end{proof}

\subsection{Stability of the Interconnection}
Having characterized the ISS properties of both MPC and the optimizer, we now consider the interconnected system \eqref{eq:plant_opt}. The following theorem identifies sufficient conditions under which \eqref{eq:plant_opt} is asymptotically stable when $\mc{T}^\ell$ is defined using PGM. 

\begin{thm}\label{thm:Stability}
Suppose Assumptions \ref{ass:lti-ocp}-\ref{ass:cnstr} hold and $\mc{T}^\ell$ is defined using the PGM \eqref{eq:ProjGrad}. Then, the closed-loop system \eqref{eq:plant_opt} is asymptotically stable if $\zeta \gamma_1\gamma_2 (\ell)<1$, where $\zeta = 2  \|  H^{-\frac12} G P^{-\frac12} \|  \| {W}^\frac12 B \Xi \|  $ and $\gamma_1,\gamma_2$ are defined in Theorems~\ref{thm:ISSgain_MPC} and \ref{thm:PGM_ISS}. Moreover, its region of attraction $\mc{R}$ satisfies $\mc{R} \subseteq \Omega \times \reals^{Nm}$ with $\Omega$ defined in Theorem~\ref{thm:ISSgain_MPC}.
\end{thm}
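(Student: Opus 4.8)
The plan is to close the feedback loop between the two subsystems via a small-gain argument built on the asymptotic gains already established. By Theorem~\ref{thm:ISSgain_MPC} the plant \eqref{eq:arbitrarySuboptimalUpdate} is ISS from the suboptimality input $\|\bar B e_k\|_W$ to the state $\|x_k\|_P$ with asymptotic gain $\gamma_1$, and by Theorem~\ref{thm:PGM_ISS} the optimizer \eqref{eq:opt_dyn} is ISS from the increment $\|G\Delta x_k\|_{\inv{H}}$ to the error $\|e_k\|$ with asymptotic gain $\gamma_2(\ell)$. Because the two estimates are phrased in different signals and norms, the first step is to supply the two interconnection gains that convert each subsystem's output into the other's input.

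For the optimizer output $e_k$ entering the plant, submultiplicativity of the spectral norm gives $\|\bar B e_k\|_W = \|W^{\frac12}B\Xi e_k\| \le \|W^{\frac12}B\Xi\|\,\|e_k\|$. For the plant output $x_k$ entering the optimizer, I would split the increment $\Delta x_k = x_{k+1}-x_k$ and factor through $P^{\frac12}$:
\[
\|G\Delta x_k\|_{\inv{H}} = \|H^{-\frac12}G(x_{k+1}-x_k)\| \le \|H^{-\frac12}GP^{-\frac12}\|\bigl(\|x_{k+1}\|_P+\|x_k\|_P\bigr),
\]
so that $\limsupk \|G\Delta x_k\|_{\inv{H}} \le 2\|H^{-\frac12}GP^{-\frac12}\|\,\limsupk\|x_k\|_P$. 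The triangle-inequality split of $\Delta x_k$ is precisely where the factor of $2$ in the definition of $\zeta$ originates.

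Composing the four inequalities around the loop gives
\[
\limsupk\|x_k\|_P \le \gamma_1\,\|W^{\frac12}B\Xi\|\,\gamma_2(\ell)\,2\|H^{-\frac12}GP^{-\frac12}\|\,\limsupk\|x_k\|_P = \zeta\,\gamma_1\,\gamma_2(\ell)\,\limsupk\|x_k\|_P .
\]
If $\zeta\gamma_1\gamma_2(\ell)<1$ and $\limsupk\|x_k\|_P$ is finite, this forces $\limsupk\|x_k\|_P = 0$, whence $x_k\to 0$; feeding $\limsupk\|G\Delta x_k\|_{\inv{H}}\to 0$ back through Theorem~\ref{thm:PGM_ISS} yields $e_k\to 0$, and therefore $z_k = S(x_k)+e_k \to S(0)=0$. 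This establishes attractivity of the origin of \eqref{eq:plant_opt}.

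The delicate point, which I expect to be the main obstacle, is that Theorem~\ref{thm:ISSgain_MPC} is only a \emph{regional} ISS statement requiring $x_0\in\Omega$ and $\{e_k\}\subseteq\mc{E}$, so the asymptotic-gain composition above is only licensed once one knows the trajectory never leaves the region where both bounds are valid and that $\limsupk\|x_k\|_P$ is finite in the first place. I would therefore precede the asymptotic argument by using the transient (non-asymptotic) forms of both ISS estimates to establish forward invariance of a suitable subset of $\Omega\times\reals^{Nm}$: the condition $\zeta\gamma_1\gamma_2(\ell)<1$ keeps the coupled transient bounds bounded, ensuring $x_k$ remains in $\Omega$ and $e_k$ remains in $\mc{E}$ for all $k$, which both justifies restarting the plant estimate at arbitrary times (hence its asymptotic gain) and yields the region-of-attraction characterization $\mc{R}\subseteq\Omega\times\reals^{Nm}$. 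Finally, the decaying terms $\beta^k\|x_0\|_W$ and $\eta^{k\ell}\|e_0\|$ together with the small-gain-bounded cross terms furnish a $\mc{KL}$-type bound on $(x_k,e_k)$, giving Lyapunov stability, which combined with the attractivity above yields asymptotic stability.
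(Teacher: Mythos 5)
Your proposal is correct and takes essentially the same route as the paper: the same two interconnection bounds (including the factor-of-two triangle-inequality split of $\Delta x_k$ through $P^{\frac12}$ that produces $\zeta$), composed around the loop into the small-gain condition $\zeta\gamma_1\gamma_2(\ell)<1$. The only difference is at the final step: where you re-derive the small-gain implication by hand and leave the delicate regional/transient part (forward invariance keeping $x_k\in\Omega$ and $\{e_k\}\subseteq\mc{E}$, finiteness of the limsups, the $\mc{KL}$ bound, and the existence of $\mc{R}\subseteq\Omega\times\reals^{Nm}$) as an unexecuted sketch, the paper simply cites the small-gain theorem \cite[Theorem 1]{jiang2004nonlinear}, which packages exactly that content.
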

\begin{proof}
To begin, note that
\begin{align*}
  \limsupk \|G \Delta x_k\|_{\inv{H}}  &= \limsupk \|G(x_{k+1} - x_k)\|_{\inv{H}} ,\\
  & \leq \limsupk \|G x_{k+1}\|_{\inv{H}}  + \limsupk \|G x_k\|_{\inv{H}}  \\
  &= 2\limsupk \|H^{-\frac12} G P^{-\frac12} P^\frac12 x_k\|  \\
  &\leq 2 \|  H^{-\frac12} G P^{-\frac12} \|  \limsupk  \| x_k \|_P.
\end{align*}
Using Theorem \ref{thm:ISSgain_MPC}, $\limsupk \|x\|_P \leq \gamma_1 \limsupk \|\bar B e_k\|_W$ and thus 
\begin{align}
     \limsupk \|G \Delta x_k\|_{\inv{H}}  &\leq 2 \|  H^{-\frac12} G P^{-\frac12} \| \gamma_1 \limsupk \|\bar{B} e_k\|_W  \label{eq:coupledISS_IntEq1}
\end{align}
provided $x_0\in \Omega$ and  $\{e_k\} \subseteq \mc{E}$. Since the input disturbance is given by $e_k = z_k-S(x_k)$ and using the bound
\begin{align} \label{eq:PGM_ISS1}
    \|\bar{B} e_k\|_W \leq \| {W}^\frac12 \bar{B} \| \| e_k \|,
\end{align}
we have that the MPC subsystem is not only ISS (as stated in Theorem \ref{thm:ISSgain_MPC}), but also satisfies
\begin{equation} \label{eq:PGM_ISS2}
    \limsupk \|G \Delta x_k\|_{\inv{H}} \leq\zeta \gamma_1\limsupk \|e_k\|.
\end{equation}
 Moreover, by Theorem~\ref{thm:PGM_ISS} the PGM subsystem in \eqref{eq:plant_opt} satisfies
\begin{equation} \label{eq:PGM_ISS3}
  \limsupk \|e_k\| \leq \gamma_2(\ell) \limsupk \|G \Delta x_k\|_{\inv{H}}.
\end{equation}
Combining \eqref{eq:PGM_ISS2} and \eqref{eq:PGM_ISS3} we conclude that
\begin{equation}
  \limsupk \|e_k\| \leq \zeta \gamma_1 \gamma_2(\ell) \limsupk \|e_k\|,
\end{equation}
therefore, by the Small Gain Theorem \cite[Theorem 1]{jiang2004nonlinear}, the interconnected system is asymptotically stable if $\zeta\gamma_1\gamma_2(\ell) < 1$ and $\{e_k\}\subseteq \mc{E}$. Moreover, by \cite[Theorem 1]{jiang2004nonlinear}, there exists some $\mc{R} \subseteq \Omega \times \reals^{Nm} \neq \emptyset$ such that, if $(x_0,z_0) \in \mc{R}$, then $\{e_k\}\subseteq \mc{E}$.
\end{proof}

\begin{cor} \label{cor:lStar}
Under Assumptions \ref{ass:lti-ocp}-\ref{ass:cnstr}, the PGM based closed-loop system is asymptotically stable if
\begin{equation*}
    \ell > \ell^* = - \frac{\log\left(\zeta \gamma_1 b + 1\right) }{ \log \left( \eta \right)},
\end{equation*}
where $\eta$, $\gamma_1$, $b$, and $\zeta$  are defined in Theorems~\ref{thm:ProjGrad_Convergence}, \ref{thm:ISSgain_MPC}, \ref{thm:PGM_ISS}, and \ref{thm:Stability}. Moreover, since $\eta <1$ and the other constants are finite, $\ell^* \in (0,\infty)$.
\end{cor}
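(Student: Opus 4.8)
The plan is to reduce Corollary~\ref{cor:lStar} to a direct algebraic manipulation of the small-gain condition $\zeta\gamma_1\gamma_2(\ell) < 1$ already furnished by Theorem~\ref{thm:Stability}, solving it explicitly for $\ell$. Since Theorem~\ref{thm:Stability} guarantees asymptotic stability whenever this product is below one (and simultaneously produces a nonempty region of attraction $\mc{R}$ on which $\{e_k\}\subseteq\mc{E}$), it suffices to show that $\ell > \ell^*$ forces $\zeta\gamma_1\gamma_2(\ell) < 1$.

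First I would substitute the closed form $\gamma_2(\ell) = b\eta^\ell/(1-\eta^\ell)$ from Theorem~\ref{thm:PGM_ISS} into the stability inequality. Because $\eta = (\kappa(H)-1)/(\kappa(H)+1) \in (0,1)$ by Theorem~\ref{thm:ProjGrad_Convergence} and $\ell > 0$, the quantity $\eta^\ell$ lies in $(0,1)$, so that $1 - \eta^\ell > 0$; this lets me clear the denominator without reversing the inequality. The condition $\zeta\gamma_1 b\,\eta^\ell/(1-\eta^\ell) < 1$ then becomes $\zeta\gamma_1 b\,\eta^\ell < 1 - \eta^\ell$, and collecting the common factor yields $\eta^\ell(\zeta\gamma_1 b + 1) < 1$, i.e. $\eta^\ell < (\zeta\gamma_1 b + 1)^{-1}$.

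It remains to invert the exponential. Taking logarithms and noting that $\log\eta < 0$ (so that the division reverses the sense of the inequality) gives $\ell > -\log(\zeta\gamma_1 b + 1)/\log\eta = \ell^*$, which is exactly the claimed threshold. Each of these steps is an equivalence under the established sign conditions, so $\ell > \ell^*$ is in fact equivalent to the small-gain condition, and stability follows from Theorem~\ref{thm:Stability}. For the finiteness claim I would observe that $\zeta$, $\gamma_1 = \beta(1-\beta)^{-1}$, and $b = \|H^{-\frac12}\|$ are all finite and positive (the first two because $\beta \in (0,1)$ and the operator norms are finite, the last because $H\in\spd_{++}^{Nm}$); hence $\zeta\gamma_1 b + 1 > 1$ gives $\log(\zeta\gamma_1 b + 1) > 0$, and combined with $\log\eta < 0$ this places $\ell^* \in (0,\infty)$.

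I do not anticipate a genuine obstacle here: the result is an explicit inversion of an already-proven sufficient condition. The only point requiring care is the bookkeeping of inequality directions when multiplying through by $1-\eta^\ell$ and when dividing by $\log\eta$, both of which hinge on $\eta \in (0,1)$; establishing these signs at the outset removes any ambiguity and makes the chain of equivalences routine.
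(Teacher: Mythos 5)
Your proposal is correct and is precisely the argument the paper intends (the corollary is stated without proof as an immediate consequence of Theorem~\ref{thm:Stability}): substituting $\gamma_2(\ell) = b\eta^\ell/(1-\eta^\ell)$ into the small-gain condition $\zeta\gamma_1\gamma_2(\ell)<1$ and inverting, with the sign bookkeeping justified by $\eta^\ell\in(0,1)$, yields exactly the threshold $\ell^*$, and your finiteness argument matches the paper's remark. No gaps; the chain of equivalences and the positivity/finiteness of $\zeta$, $\gamma_1$, and $b$ are all properly established.
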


The following theorem mirrors Theorem~\ref{thm:Stability} when $\mc{T}^\ell$ is defined using the APGM instead of the PGM.

\begin{thm} \label{thm:APGM_stability}
Suppose Assumptions \ref{ass:lti-ocp}-\ref{ass:cnstr} hold. Then, if $\mc{T}^\ell$ represents Algorithm~\ref{algo:APGM}, the corresponding closed-loop system \eqref{eq:plant_opt} is asymptotically stable if $\zeta_a \gamma_1\gamma_2^a (\ell)<1$ and $\ell > \bar{\ell}$ where $\bar{\ell}$ is defined in Corollary~\ref{corr:ell_star_a}, $\zeta_a = 2 \|  H^{-\frac12} G P^{-\frac12} \| | {W}^\frac12 \bar{B} H^{-\frac12}  \|$, and $\gamma_1,\gamma_2^a$ are defined in Theorems~\ref{thm:ISSgain_MPC} and \ref{thm:APGM_ISS}. Moreover, its region of attraction $\mc{R}$ satisfies $\mc{R} \subseteq \Omega \times \reals^{Nm}$ with $\Omega$ defined in Theorem~\ref{thm:ISSgain_MPC}.
\end{thm}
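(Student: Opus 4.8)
The plan is to mirror the proof of Theorem~\ref{thm:Stability}, again closing the loop between the plant-side asymptotic gain of Theorem~\ref{thm:ISSgain_MPC} and the optimizer-side asymptotic gain of Theorem~\ref{thm:APGM_ISS} via the nonlinear Small Gain Theorem \cite[Theorem 1]{jiang2004nonlinear}. The only structural difference is that the APGM error bound is expressed in the $H$-norm rather than the Euclidean norm, so I would carry $\|e_k\|_H$ through the interconnection argument and track how this reshapes the composite gain into $\zeta_a$.

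For the plant-to-optimizer channel, I would reuse verbatim the first chain of inequalities from the proof of Theorem~\ref{thm:Stability}, since it concerns only the plant dynamics and the solution mapping: the triangle inequality together with Corollary~\ref{cor:S-coerc} and Theorem~\ref{thm:ISSgain_MPC} yields $\limsupk \|G\Delta x_k\|_{\inv H} \le 2\|H^{-\frac12}GP^{-\frac12}\|\,\gamma_1 \limsupk \|\bar B e_k\|_W$ whenever $x_0\in\Omega$ and $\{e_k\}\subseteq\mc E$. The adjustment occurs in bounding $\|\bar B e_k\|_W$: instead of factoring out the Euclidean norm of $e_k$, I would insert $H^{-\frac12}H^{\frac12}$ and write $\|\bar B e_k\|_W = \|W^{\frac12}\bar B H^{-\frac12}H^{\frac12} e_k\| \le \|W^{\frac12}\bar B H^{-\frac12}\|\,\|e_k\|_H$. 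Substituting this gives $\limsupk \|G\Delta x_k\|_{\inv H} \le \zeta_a \gamma_1 \limsupk \|e_k\|_H$ with $\zeta_a$ exactly as stated.

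For the optimizer-to-plant channel, Theorem~\ref{thm:APGM_ISS} (valid because we assume $\ell>\bar\ell$, so that $\eta_a(\ell)<1$ by Corollary~\ref{corr:ell_star_a}) provides $\limsupk \|e_k\|_H \le \gamma_2^a(\ell)\limsupk \|G\Delta x_k\|_{\inv H}$. Composing the two channels produces the self-referential inequality $\limsupk \|e_k\|_H \le \zeta_a \gamma_1 \gamma_2^a(\ell) \limsupk \|e_k\|_H$, which forces $\limsupk\|e_k\|_H = 0$, and hence asymptotic stability, precisely when $\zeta_a\gamma_1\gamma_2^a(\ell)<1$. The existence of a nonempty region of attraction $\mc R\subseteq\Omega\times\reals^{Nm}$ on which $\{e_k\}\subseteq\mc E$ then follows from \cite[Theorem 1]{jiang2004nonlinear} exactly as in Theorem~\ref{thm:Stability}.

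The argument is essentially bookkeeping, so I expect no deep obstacle; the one point requiring care is norm consistency. The APGM contracts in $\|\cdot\|_H$ while the MPC gain is naturally stated in the $W$- and $P$-norms, so the composite gain must absorb the mismatch through the operator norm $\|W^{\frac12}\bar B H^{-\frac12}\|$ rather than $\|W^{\frac12}\bar B\|$; getting this factor correct, and confirming that the $\ell>\bar\ell$ hypothesis is exactly what licenses invoking Theorem~\ref{thm:APGM_ISS} together with the associated forward-invariance of $\mc E$, are the only places where the reasoning genuinely departs from the PGM case.
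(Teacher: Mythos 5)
Your proposal is correct and follows the paper's proof exactly: the paper likewise proves this theorem by repeating the argument of Theorem~\ref{thm:Stability} with the three substitutions you identify, namely $\|\bar{B} e_k\|_W \leq \|W^{\frac12}\bar{B}H^{-\frac12}\|\,\|e_k\|_H$ in place of \eqref{eq:PGM_ISS1}, the $H$-norm versions of \eqref{eq:PGM_ISS2} and \eqref{eq:PGM_ISS3}, and the added restriction $\ell > \bar{\ell}$, before invoking \cite[Theorem 1]{jiang2004nonlinear}. No gaps; the norm-consistency point you flag (absorbing the $H$-to-$W$ norm mismatch into $\|W^{\frac12}\bar{B}H^{-\frac12}\|$) is precisely what distinguishes $\zeta_a$ from $\zeta$ in the paper as well.
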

\begin{proof}
The proof is nearly identical to that of Theorem~\ref{thm:Stability}. Simply replace \eqref{eq:PGM_ISS1} with $\|\bar{B} e_k\|_W \leq\| {W}^\frac12 \bar{B} H^{-\frac12} \| \|e_k\|_H$, \eqref{eq:PGM_ISS2} with $\limsupk \|G \Delta x_k\|_{\inv{H}} \leq \zeta_a \gamma_1 \limsupk \|e_k\|_H$ (Theorem~\ref{thm:ISSgain_MPC}), and \eqref{eq:PGM_ISS3} with $\limsupk \|e_k\|_H \leq \limsupk \gamma_2^a(\ell) \|G \Delta x_k\|_{\inv{H}}$ (Theorem~\ref{thm:APGM_ISS}). The resulting small gain condition is 
\begin{equation}
  \limsupk \|e_k\|_H \leq \zeta_a \gamma_1 \gamma_2^a(\ell) \limsupk \|e_k\|_H,
\end{equation}
and the restrictions due to Theorems~\ref{thm:ISSgain_MPC} and \ref{thm:APGM_ISS} are $x_0 \in \Omega$, $ \{e_k\} \subseteq \mc{E}$ and $\ell > \bar{\ell}$. As before, the claims follow from  \cite[Theorem 1]{jiang2004nonlinear}.
\end{proof}
\begin{cor} \label{cor:lStarA}
Under Assumptions \ref{ass:lti-ocp}-\ref{ass:cnstr}, the APGM based closed-loop system is asymptotically stable if $\ell > \max(\ell_a^*,\bar{\ell})$ where
\begin{equation*}
        \ell_a^* = 1- \frac{2 \log \left(\kappa(H)^\frac12(1+\zeta_a \gamma_1) \right)}{\log \left(1 - \kappa(H)^{-\frac12} \right)}
\end{equation*}
and $\kappa(H)$, $\gamma_1$, and $\zeta_a$ are defined in Theorems \ref{thm:ProjGrad_Convergence}, \ref{thm:ISSgain_MPC},  and \ref{thm:APGM_stability}.
\end{cor}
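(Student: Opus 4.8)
The plan is to show that the explicit threshold $\ell > \ell_a^*$ is nothing more than the small-gain inequality $\zeta_a\gamma_1\gamma_2^a(\ell) < 1$ of Theorem~\ref{thm:APGM_stability}, rewritten as a lower bound on $\ell$, while the separate requirement $\ell > \bar\ell$ is kept so that the APGM remains contractive. Once both hold, asymptotic stability is immediate from Theorem~\ref{thm:APGM_stability}, so the entire content of the corollary is an algebraic inversion of the stability condition.

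First I would invoke Corollary~\ref{corr:ell_star_a} to guarantee $\eta_a(\ell) < 1$ whenever $\ell > \bar\ell$, which makes $\gamma_2^a(\ell) = \eta_a(\ell)/(1 - \eta_a(\ell))$ well-defined and strictly positive. With $1 - \eta_a(\ell) > 0$, the condition $\zeta_a\gamma_1\,\eta_a(\ell)/(1 - \eta_a(\ell)) < 1$ can be cleared of its denominator and rearranged to the equivalent upper bound $\eta_a(\ell) < 1/(1 + \zeta_a\gamma_1)$.

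Next I would substitute the closed form $\eta_a(\ell) = \kappa(H)^{1/2}\bigl(1 - \kappa(H)^{-1/2}\bigr)^{(\ell-1)/2}$ from Theorem~\ref{lmm:APGM_error} and take logarithms. Collecting terms and using the identity $\log\kappa(H) + 2\log(1 + \zeta_a\gamma_1) = 2\log\bigl(\kappa(H)^{1/2}(1 + \zeta_a\gamma_1)\bigr)$ reduces the inequality to $(\ell-1)\log\bigl(1 - \kappa(H)^{-1/2}\bigr) < -2\log\bigl(\kappa(H)^{1/2}(1 + \zeta_a\gamma_1)\bigr)$, and solving for $\ell$ produces exactly $\ell > \ell_a^*$.

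The one step demanding care — and where a sign error is easiest to commit — is dividing through by $\log\bigl(1 - \kappa(H)^{-1/2}\bigr)$. Since $0 < 1 - \kappa(H)^{-1/2} < 1$ for $\kappa(H) > 1$, this logarithm is strictly negative, so the division reverses the inequality and converts the upper bound on $\eta_a(\ell)$ into the lower bound $\ell > \ell_a^*$. This derivation mirrors the PGM computation in Corollary~\ref{cor:lStar}, with the fixed linear rate $\eta$ replaced by the $\ell$-dependent rate $\eta_a(\ell)$; intersecting the resulting threshold with the contractivity requirement $\ell > \bar\ell$ then yields the claimed condition $\ell > \max(\ell_a^*, \bar\ell)$.
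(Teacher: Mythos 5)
Your proposal is correct and is precisely the derivation the paper intends (the corollary is stated without proof, as an algebraic consequence of Theorem~\ref{thm:APGM_stability}): clearing the denominator of $\gamma_2^a(\ell)$ to get $\eta_a(\ell) < 1/(1+\zeta_a\gamma_1)$, substituting the closed form of $\eta_a$, and dividing by the negative quantity $\log\bigl(1-\kappa(H)^{-1/2}\bigr)$ to flip the inequality yields exactly $\ell > \ell_a^*$. Your handling of the sign reversal and the retention of $\ell > \bar\ell$ to justify clearing the denominator are both exactly right.
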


\section{Discussion and Numerical Examples} \label{sec:Discussion}

Theorems \ref{thm:Stability}-\ref{thm:APGM_stability} provide sufficient conditions for the stability of TDO-MPC under fairly strict assumptions (LTI system, convex input constraints, no state constraints, projected gradient-type solvers). Although less general than existing literature, (e.g. \cite{liao2019time,zanelli2020lyapunov}), the proposed setting provides new insight on possible mechanisms that can be leveraged to ensure convergence. To guide the discussion, we will consider two benchmark systems: one stable and one unstable.

\textbf{Jones System:} For the purpose of direct comparison with existing literature, we consider the stable system addressed in \cite{richter2011computational}, i.e.
\begin{equation}
x^+ = \begin{bmatrix}
0.7 & -0.1 & 0 & 0 \\
0.2 & -0.5 & 0.1 & 0\\
0 & \phantom{-}0.1  & 0.1 & 0 \\
0.5 &  \phantom{-}0 & 0.5 & 0.5 
\end{bmatrix} x
+ 
\begin{bmatrix}
0 & 0.1 \\
0.1 & 1 \\
0.1 & 0 \\
0 & 0
\end{bmatrix} u,
\end{equation}
subject to the initial conditions $x_0 = [10 \ {\small -}10 \ 10 \ {\small -}10]^T$, input constraints $\mc{U} = [-1,1] \times [-1,1]$, and state cost with $Q = 10I$. Unless otherwise specified, the nominal values for the horizon length, input weighting matrix, and solver iterations are $N = 5$, $R = I$, $\ell=10$. 

\textbf{Inverted Pendulum:} We use a linear model of an inverted pendulum on a cart to investigate the closed-loop behavior of TDO-MPC for an unstable system. The equations of motion are 
\begin{subequations}
\begin{align}
&4/3~ml^2 \ddot{\phi} - ml\ddot{y} = mgl\phi   \\
&(M + m) \ddot{y}- ml \ddot{\phi}  = -b\dot{y}  + F, 
\end{align}
\end{subequations}
where $y$ is the position of the cart, $\phi$ is the angle of the pendulum, $g = 9.81 \ m/s^2$ is the gravitational constant, $M = 1 \ kg$ is the mass of the cart, and $m = 0.1 \ kg$, $b = 0.1 \ Ns/m$, and $l = 1 \ m$ are the mass, damping coefficient, and length of the pendulum respectively.  The states and control inputs are
\begin{equation}
x = [y \ \ \dot{y} \ \  \phi \ \ \dot{\phi}]^T, \ \ u = F.
\end{equation}
The angle $\phi = 0$ corresponds to the upright position and the linear model is generated by linearization about the origin. Given the initial state $x_0 = [2 \ 0 \ 0 \ 0]^T$, the control objective is to drive the system to the origin under constraints $\mc{U} = [-1,1]$. The control law is implemented using a sampling period of $\tau = 0.2 \ s$ and state weighting matrix $Q = I$. Unless otherwise specified, the nominal values for the horizon length, input weighting matrix, PGM solver iterations, and APGM solver iterations are $N = 7$, $R = I$, $\ell_{\text{PGM}}=10^5$, and $\ell_{\text{APGM}}=8 \times 10^3$.

The following subsections describe a few different mechanisms for ensuring the closed-loop stability of TDO-MPC and the advantages and disadvantages of each.

\subsection{Increase solver iterations}
\label{subsec:solverIterations}

As detailed in, e.g. \cite{liao2019time,zanelli2020lyapunov}, the obvious way to stabilize TDO-MPC is to perform more optimization iterations per unit time. Indeed, the ISS gain of PGM, derived in Theorem~\ref{thm:PGM_ISS}, is $\gamma_2(\ell) = b \eta^\ell/(1-\eta^\ell)$. Since $\eta<1$, it follows that $\gamma_2 \to 0$ monotonically as $\ell\to\infty$. Similarly, as proven in Theorem~\ref{thm:APGM_ISS}, for $\ell > \bar{\ell}$, the asymptotic gain of APGM is such that $\gamma_2^a(\ell) \to 0$ monotonically as $\ell\to\infty$. Figures \ref{fig:gamma2Plot} and \ref{fig:systemResponses_Pend} illustrate how increasing $\ell$ can help achieve stability.

The main limitation with this approach is that computation time is proportional to the number of iterations. As such, $\ell$ is effectively upper-bounded by the real-time requirements of the application.

\begin{figure}
	\centering
	\includegraphics[scale = 0.35]{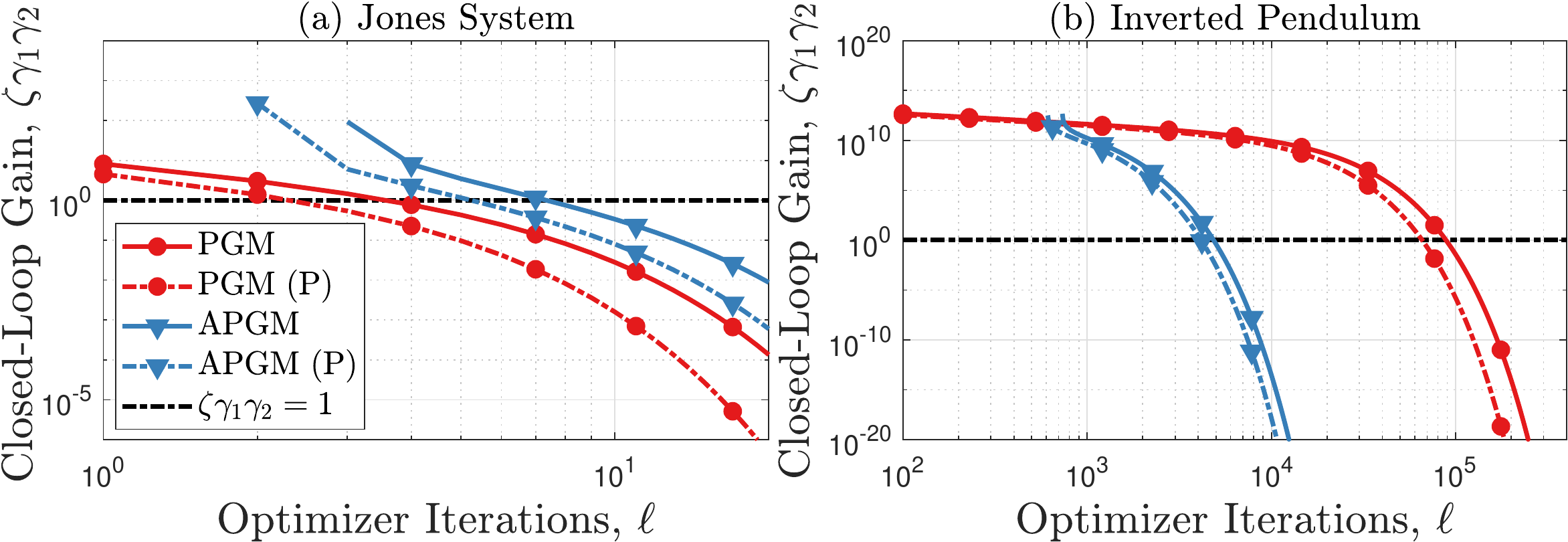}
	\caption{Gains for the PGM, APGM, and their preconditioned (P) variants. The line $\zeta \gamma_1 \gamma_2 =  1$ is the stability threshold. The inflection seen for APGM at lower values is caused by $\ell$ approaching $\bar{\ell}$ defined in Corollary \ref{corr:ell_star_a}. }
	\label{fig:gamma2Plot}
\end{figure}

\begin{figure}
	\centering
	\includegraphics[scale = 0.35]{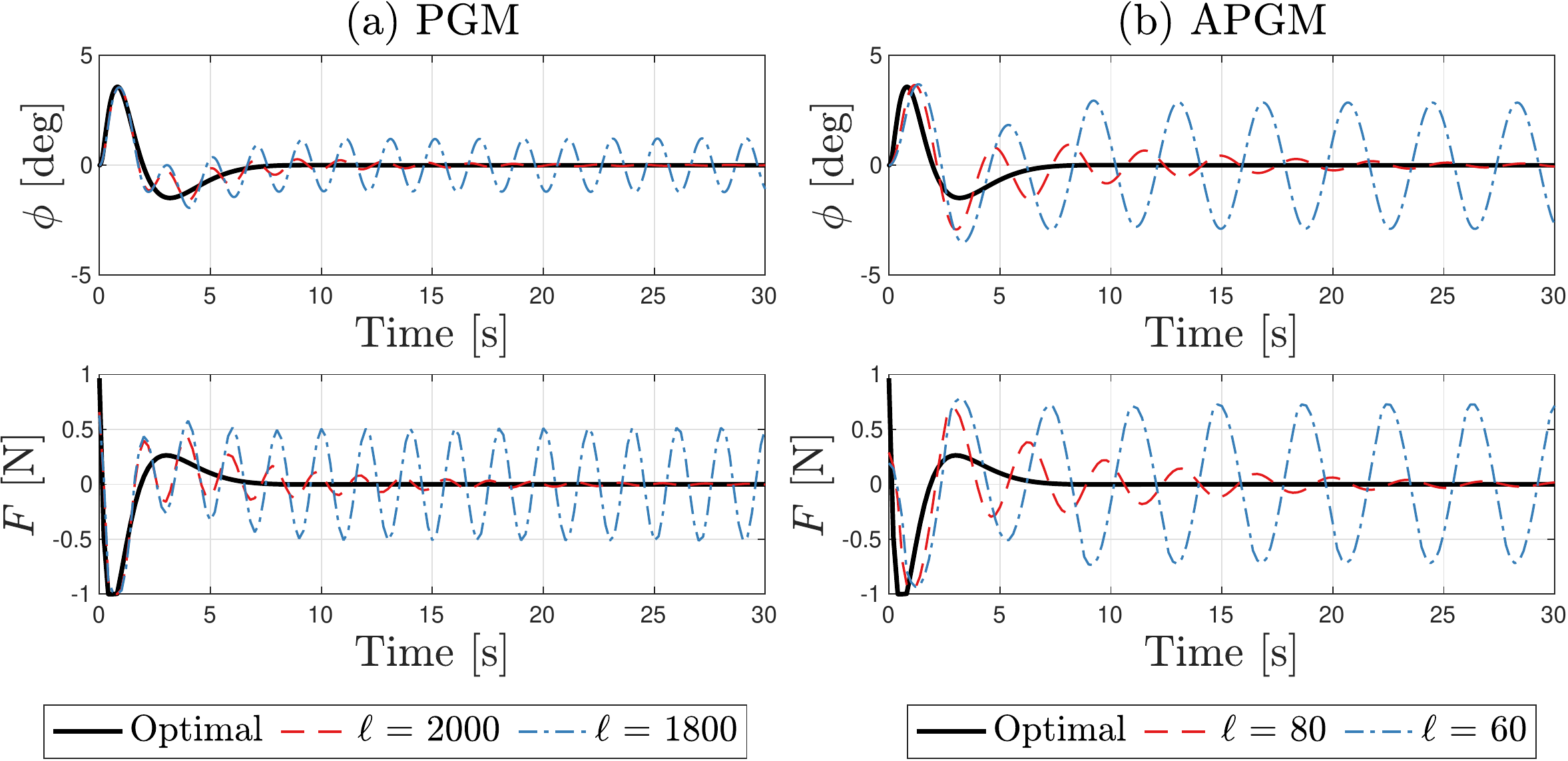}
	\caption{Closed-loop system responses of the inverted pendulum for varying amounts of optimizer iterations using preconditioned PGM and APGM algorithms. As predicted by Theorems~\ref{thm:Stability} and \ref{thm:APGM_stability} and illustrated in Figure \ref{fig:gamma2Plot}, when the APGM is used the system is asymptotically stable after fewer iterations due to a weaker dependence on $\kappa(H)$.}   	
	\label{fig:systemResponses_Pend}
\end{figure}

\subsection{Use preconditioning}
\label{subsec:precond}

Another option is to improve the condition number $\kappa = \kappa(H)$ to decrease the ISS gain of the optimization algorithm. 
Careful inspection of the gains of the PGM and APGM, $\gamma_2(\ell)$ and $\gamma_2^a(\ell)$ which are established in Theorems~\ref{thm:PGM_ISS} and \ref{thm:APGM_ISS} respectively, reveals that $\gamma_2(\ell) \propto \eta^\ell/(1-\eta^\ell)$ with $\eta \propto (1- \kappa^{-1})$ and $\gamma_2^a(\ell) = \eta_a(\ell)/(1- \eta_a(\ell))$ with $\eta_a(\ell) \propto (1-\sqrt{\kappa^{-1}})^\ell$. It follows that $\gamma_2(\ell) \to 0$ and $\gamma_2^a(\ell) \to 0$ monotonically as $\kappa \to 1$. Thus, the APGM is more suited to ill-conditioned problems since its convergence rate depends on the square-root of the condition number. Figures \ref{fig:gamma2Plot} and \ref{fig:systemResponses_Pend} illustrate how preconditioning affects stability.

For the OCP \eqref{eq:ocp_reduced}, an explicit preconditioning process can be performed by defining the preconditioned OCP
\begin{equation} 
\underset{\tilde{z} \in \tilde{\mc{Z}}}{\mathrm{min}} \quad \tilde{z}^T D^T H D \tilde{z} + 2\tilde{z}^T D^T Gx + x^TWx,
\end{equation}
with $D \in \spd^{Nm}_{++}$, $\tilde{\mc{Z}} = D^{-1}\mc{Z}$ and $ \tilde{z} = D^{-1} {z}$, such that  $\kappa(D^T H D) < \kappa(H)$. If $D$ is diagonal, the projection onto the transformed constraint set $\tilde{\mc{Z}}$ remains simple. The optimal diagonal preconditioner $D$ can be computed by solving an offline convex semidefinite programming problem, see \cite[Section V-C]{richter2011computational}. Although there is a limit to how much $\kappa(H)$ can be reduced, there are no drawbacks to diagonal preconditioning, and as such we will only consider the preconditioned variants of each algorithm in the sequel.

When appropriate, pre-stabilization\footnote{Instability in \eqref{eq:lti} leads to ill-conditioning of $H$.} of \eqref{eq:lti} is another effective tool, as is non-diagonal preconditioning. These methods lead to polyhedral constraint sets that cannot be easily projected onto. As such the use of dual methods, see e.g., \cite{patrinos2013accelerated}, for TDO is a promising direction for future work. 

\subsection{Tune the cost function}
\label{subsec:tuneCost}
A third mechanism for stabilizing TDO-MPC is to adjust the weighting matrices in the OCP \eqref{eq:ocp}. Indeed, for a fixed $Q$, the value of $R$ impacts $\gamma_1$ and $\gamma_2$ through several different mechanisms:
\begin{itemize}
 \item \textbf{Condition Number:} Since $H = \bar H + (I_N\otimes R)$, where $\bar H\in \spd_{+}^{Nm}$ is defined in the appendix and is independent of $R$, increasing $\lambda^-(R)$ will reduce $\kappa(H)$, in turn reducing $\gamma_2$;
 \item \textbf{Feedback Gain:} As $R$ penalizes the control effort, it effects the solution mapping. Manipulating Corollary~\ref{cor:S-coerc}, we see that
\begin{align*}
  \|S(x) - S(y)\|_H &\leq \|H^{-\frac12}\| \|G(x-y)\|\\
  & \leq \|(\bar H + I_N\otimes R) ^{-\frac12}\| \|G(x-y)\|
\end{align*}
and thus the Lipschitz constant of $S$ decreases as $\lambda^-(R)$ increases;
\item \textbf{Closed-loop Cost:} As $\lambda^-(R)\to\infty$, the optimal cost of the closed-loop system tends to the cost of the open-loop system subject to $u=0$. Thus, if $A$ is Schur, the matrix $W$ in \eqref{eq:cost_reduced}, which represents the cost of inaction, satisfies $W\preceq U$, where $U \in \spd_{++}^n$ satisfies the Lyapunov equation $U=Q+A^TUA$. Otherwise, the closed-loop cost $W$ grows unbounded.
\end{itemize}
These effects can be opposing, leading to interesting behaviour. Figures \ref{fig:Gains_vs_R_SD} and \ref{fig:Gains_vs_R_IP} illustrate the effects of $R$ on the benchmark systems. It may not be desirable to select an arbitrarily large input penalty because it typically leads to longer response times. As such, the choice of $R$ is subject to a trade-off between the stability and performance of TDO-MPC.



\begin{figure}
	\centering
	\includegraphics[scale = 0.35]{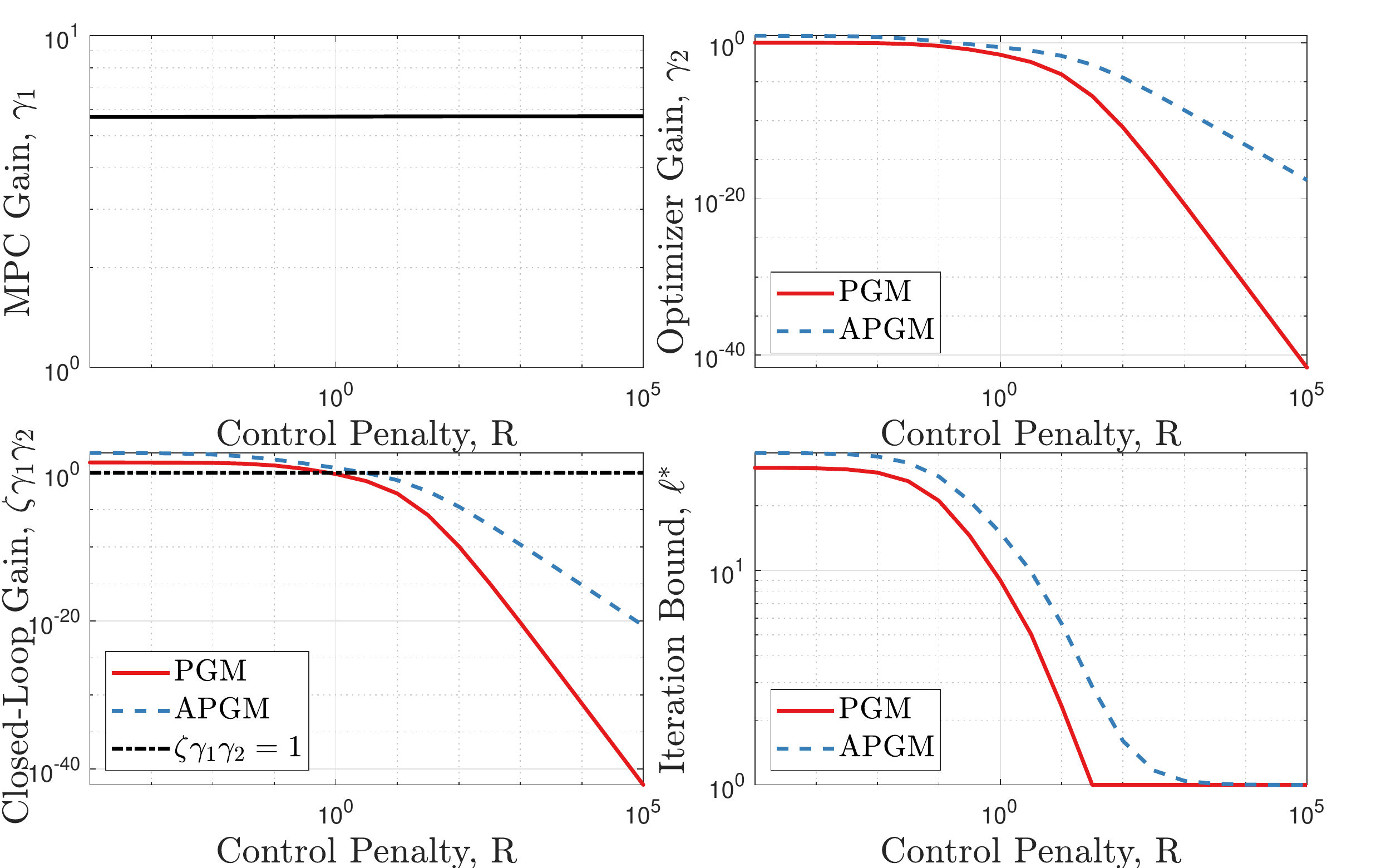}
	\caption{(Jones System) Since the system is stable, $W$, and thus $\gamma_1$ is insensitive to changes in $R$ while $\gamma_2$ is monotonically decreasing with $R$. Thus, the closed-loop system can be stabilized by increasing the input penalty. Note that PGM  outperforms APGM since the Hessian is well conditioned.}
	\label{fig:Gains_vs_R_SD}
\end{figure}

\begin{figure}
	\centering
	\includegraphics[scale = 0.35]{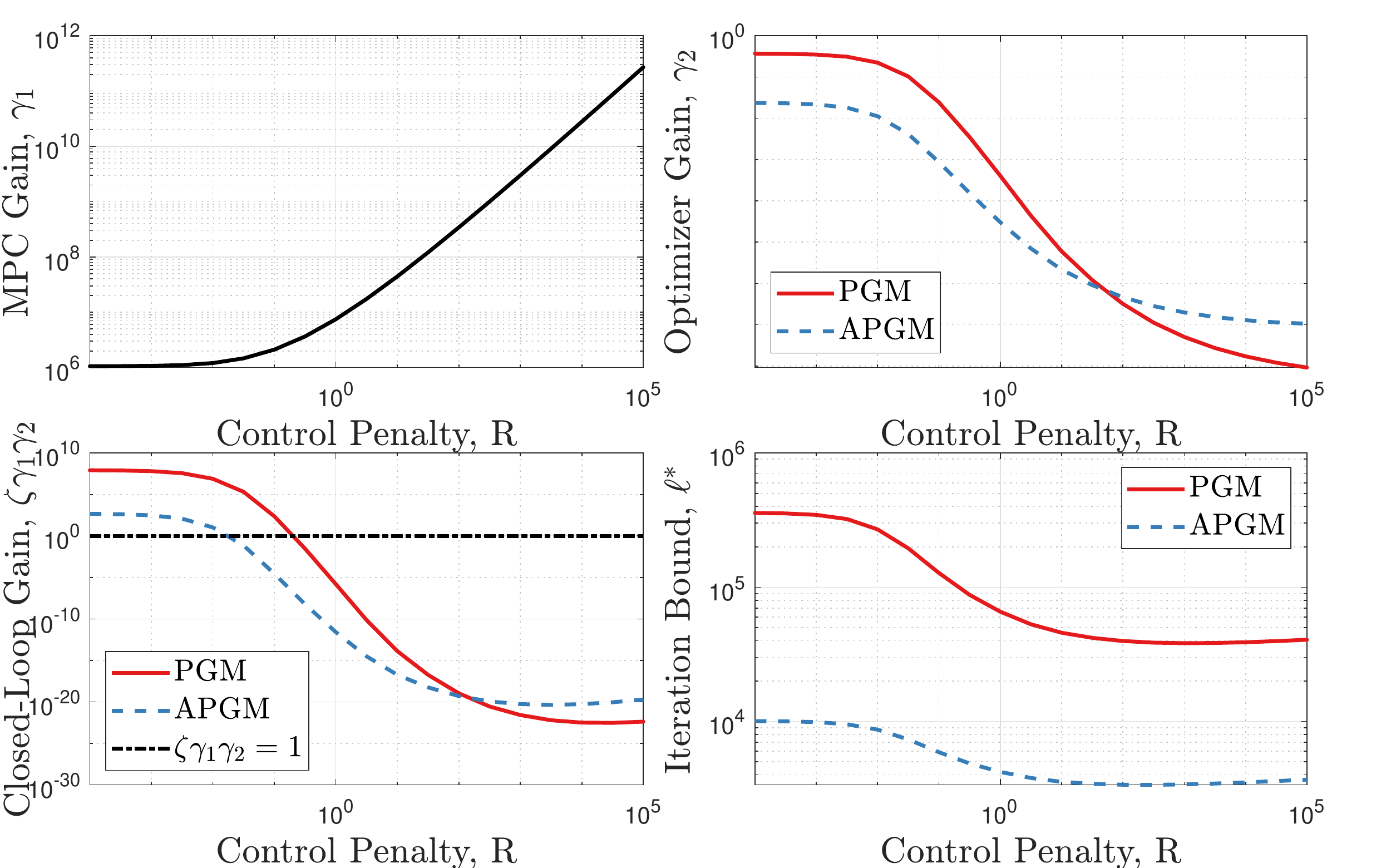}
	\caption{(Inverted Pendulum) 
	Since the system is unstable, $\gamma_1$ grows unbounded with $R$ whereas $\gamma_2$ is monotonically decreasing. Their product shows a point of inflection after which increasing $R$ is detrimental to the stability of the system. Here, APGM performs significantly better than PGM since the system is poorly conditioned.}
	\label{fig:Gains_vs_R_IP}
\end{figure}

\subsection{Decrease the prediction horizon}
\label{subsec:horizon}

The final (and somewhat counterintuitive) option for stabilizing TDO-MPC is to decrease the horizon length $N$. Equation \eqref{eq:W1andW2} in the appendix shows that $W$ can be expressed as a sum over $N$ positive semi-definite terms and thus increases as $N$ does. As $W$ increases, $\beta = \sqrt{1- \lambda_W^-(Q)}$ from Lemma~\ref{lem:valueFunDecrease} approaches $1$ and thus $\gamma_1 = \beta/(1-\beta)$ grows. For stable systems this growth asymptotes since the terms involving $A^k$ in \eqref{eq:W1andW2} to go $0$, for unstable systems it continues unbounded. Moreover, the condition number $\kappa(H)$ grows with the problem size (see the matrices in Appendix A) and thus reducing $N$ reduces $\kappa(H)$ which in turn decreases $\gamma_2$. Figures \ref{fig:Gains_vs_N_SD} and \ref{fig:Gains_vs_N_IP} illustrate how reducing $N$ can help stabilize TDO-MPC. The main drawback of this option is that reducing $N$ reduces the set of initial conditions for which the optimal MPC policy, is stabilizing. This effect is captured by the region of attraction limitations in Theorems~\ref{thm:Stability} and \ref{thm:APGM_stability}.

\begin{figure}
	\centering
	\includegraphics[scale = 0.35]{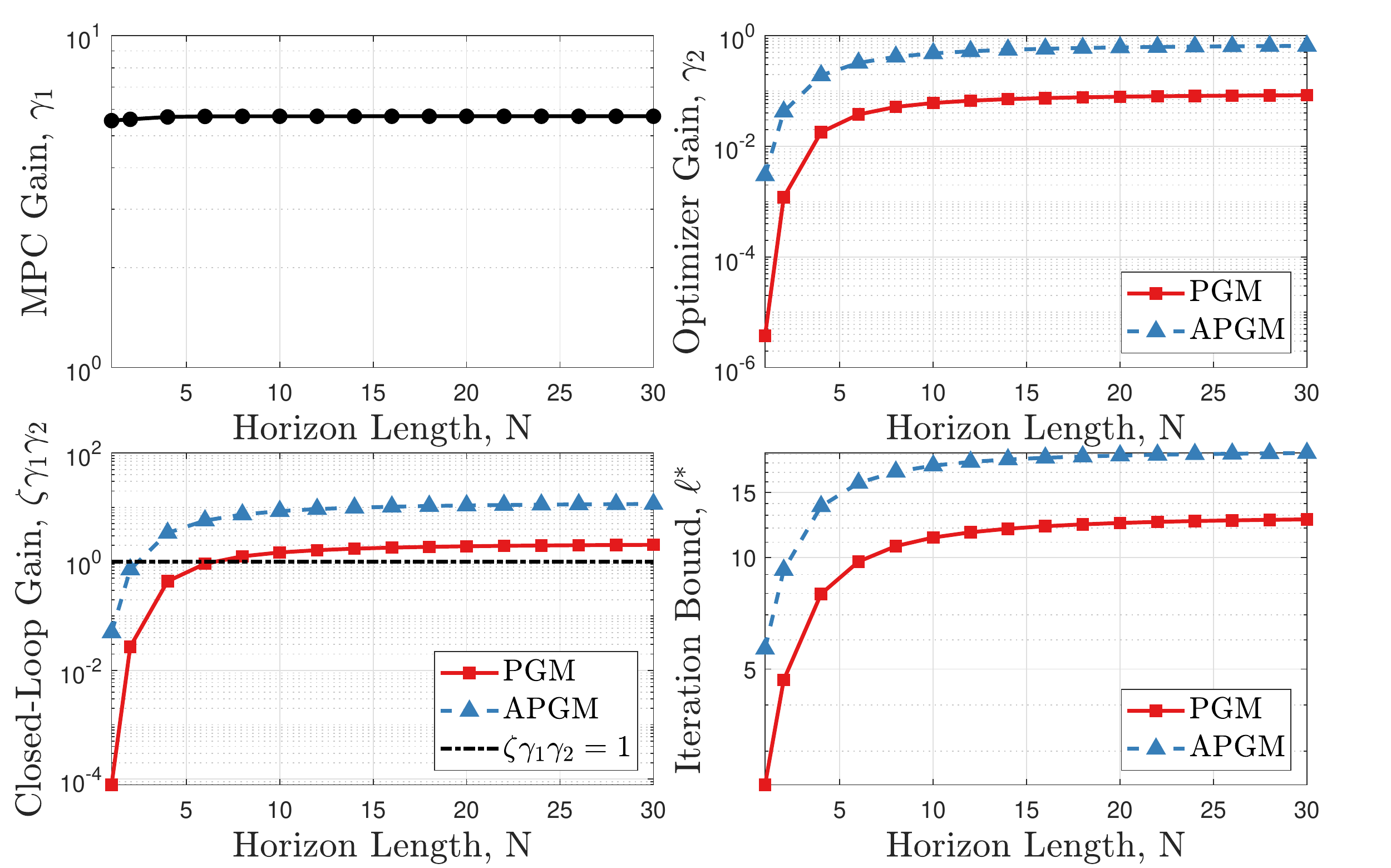}
	\caption{(Jones System) The gains and iteration bound both increase with $N$. As a result, the closed-loop system is easier to stabilize using shorter prediction horizons. The PGM outperforms APGM since the problem is well conditioned.}
	\label{fig:Gains_vs_N_SD}
\end{figure}

\begin{figure}
	\centering
	\includegraphics[scale = 0.35]{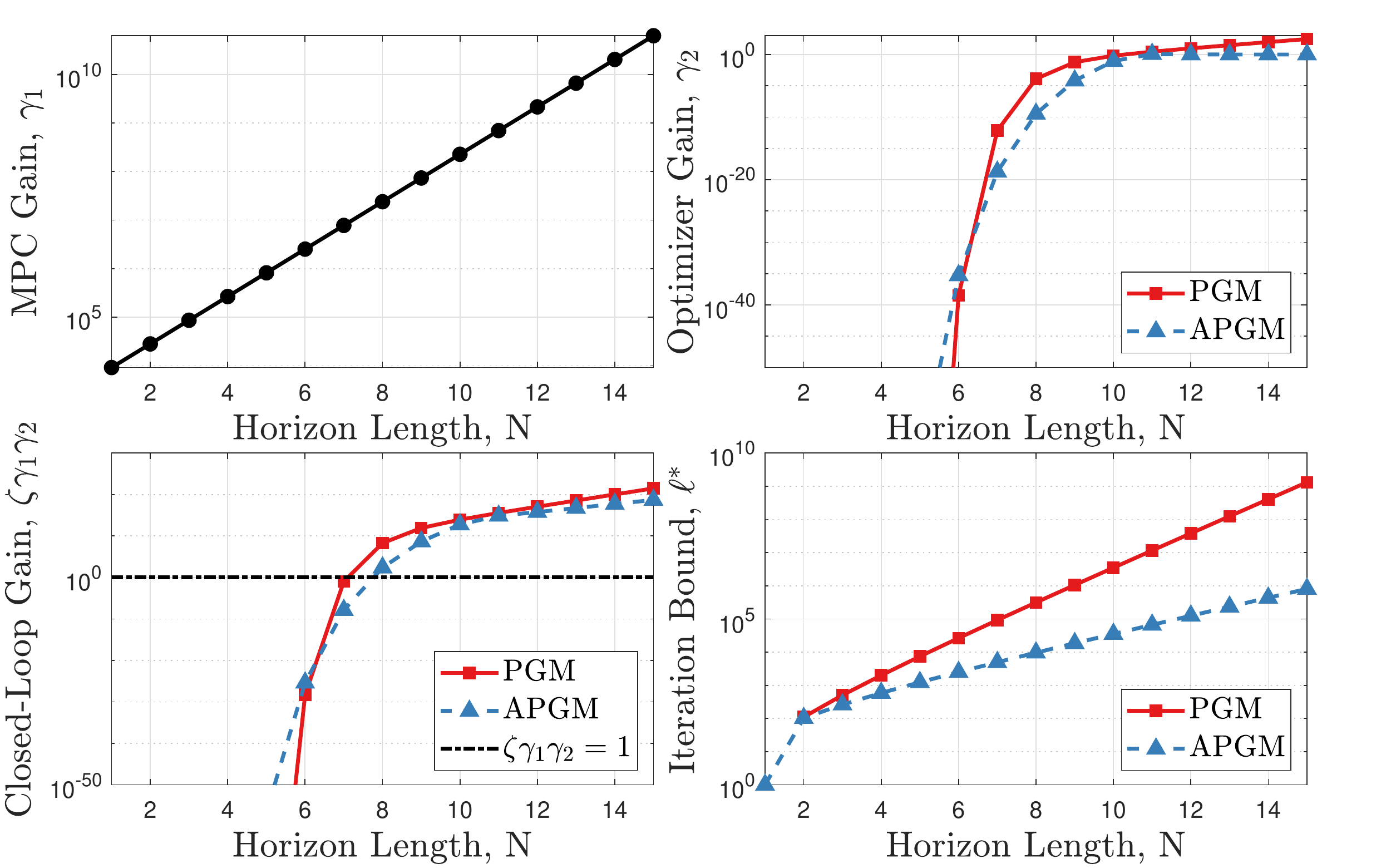}
	\caption{(Inverted Pendulum) Here we see that increasing the horizon length increases the closed-loop gain to a point where closed-loop stability is no longer guaranteed. Since the system is unstable, the APGM significantly outperforms PGM as $N$ and thus $\kappa(H) $ grows.}
	\label{fig:Gains_vs_N_IP}
\end{figure}

\begin{figure}
	\centering
	\includegraphics[scale = 0.35]{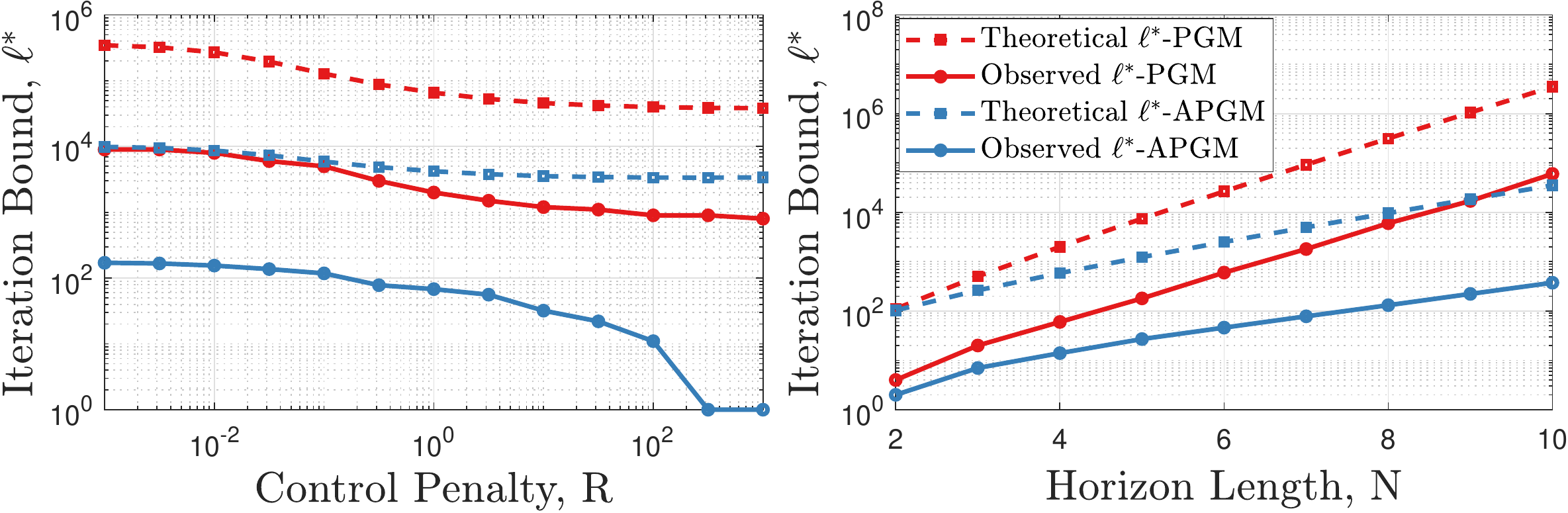}
	\caption{(Inverted Pendulum) The theoretical and observed number of iterations required for stability have the same trends despite some conservatism in the theoretical bound for PGM.}
	\label{fig:Exp_L_star_Pendulum_R}
\end{figure}

\subsection{Comparisons}
For stable systems, the iteration bounds $\ell^*$ presented in Figure \ref{fig:Gains_vs_N_SD} falls between a range of 4 to 10 iterations for both the PGM and APGM. This is consistent with the range reported in \cite[Figure 2d]{richter2011computational}. Moreover, rather than guaranteeing an arbitrary suboptimality bound, our analysis directly certifies closed-loop stability. The PGM gives rise to better bounds in this case because the Jones system is relatively well conditioned. Figure \ref{fig:Exp_L_star_Pendulum_R} shows that the overall trend obtained for $\ell^*$ is consistent with simulation results. These values are somewhat conservative. However, for the APGM the iteration bound is around 3000-4000 for many values of $R$, this is a reasonable number of iterations to perform when using a gradient based method. Further, even when the results are too conservative for certification purposes, our analysis provides useful guidelines for the design and tuning of TDO-MPC.

\section{Conclusion}
This paper analyzed the closed-loop properties of a class of TDO-MPC and provided detailed guidelines on how to ensure asymptotic stability using a variety of mechanisms, namely: increasing the number of solver iterations, using preconditioning techniques, tuning the weighting matrices in the cost, and decreasing the prediction horizon. Future work will focus on tightening the bound for unstable systems, investigating the use of dual methods in TDO, and proving that the proposed guidelines are applicable to more general TDO-MPC settings.


\bibliography{TDLMPC}
\appendix

\section*{A. Condensed POCP System Matrices}
 The matrices in \eqref{eq:cost_reduced} are $H = \bar H + (I_N\otimes R)$, $ \bar {H} = \hat{B}^T \hat{H} \hat{B}$, $G = \hat{B}^T\hat{H}\hat{A}$, $W = Q + \hat{A}^T \hat{H} \hat{A}$, $\hat{H} = \begin{bmatrix} 
    (I_{N}\otimes Q)  & 0      \\
    0                   & P      \\
    \end{bmatrix}$, 
\begin{alignat*}{2} \label{eq:AandB_hat}
  \hat{B}& = &&\begin{bmatrix}
  0 & 0 & 0\\
  B & 0 & 0\\
  \vdots & \ddots & \vdots \\
  A^{N-1} B & \cdots & B
  \end{bmatrix}, \text{ and } \hat{A} = \begin{bmatrix}
    I \\
    A\\
    \vdots \\
    A^N
  \end{bmatrix}.
\end{alignat*}

\section*{B. Proof of Lemma \ref{lem:W_geq_P}}
First, define
\begin{equation} \label{eq:W1andW2}
    W_1 = \sum_{k=0}^{N-1} (A^{k})^T Q (A^k),~ W_2 = (A^N)^T P (A^N)
\end{equation}
so that $W = W_1 + W_2$. By Assumption~\ref{ass:lti-ocp},
\begin{equation}
A^TPA = P - Q  +(A^TPB)(R+B^TPB)^{-1}(B^TPA).  \label{eq:Lyap_For_WLemma}
\end{equation}
Using \eqref{eq:Lyap_For_WLemma} to express $(A^k)^T P (A^k)$ and summing up from $k = 1$ to $N-1$ yields
\begin{equation}  \label{eq:W1plusW2}
    W = P + \sum_{k=1}^{N} (A^{k})^T PB (R+B^T P B)^{-1} B^T P (A^{k}).
\end{equation}
Since the assumptions of Lemma~\ref{lem:W_geq_P} imply $P \succ 0$ the result follows.

\end{document}